\renewcommand\eqref[1]{(\ref{#1})} 
\numberwithin{equation}{section}
\theoremstyle{plain}
\newtheorem{theorem}{Theorem}[section]
\newtheorem{lemma}[theorem]{Lemma}
\theoremstyle{definition}
\newtheorem{definition}[theorem]{Definition}
\newtheorem{remark}[theorem]{Remark}
\begin{document}
\title[Parabolic equations with Hartree-type nonlinearities]
{Fujita-type results for parabolic equations with Hartree-type nonlinearities}

\author[A. Z. Fino]{Ahmad Z. Fino}
\address{
  Ahmad Z. Fino
  \endgraf College of Engineering and Technology, \endgraf American University of the Middle East, Kuwait.}
\email{ahmad.fino@aum.edu.kw}
  
\author[B. T. Torebek ]{Berikbol T. Torebek} \address{Berikbol T. Torebek  \endgraf Institute of
Mathematics and Mathematical Modeling \endgraf 28 Shevchenko str.,
050010 Almaty, Kazakhstan.} \email{torebek@math.kz}

\keywords{convolution operator; Hartree nonlinearity; parabolic equation; nonexistence; global existence; blow-up}

\subjclass{35K58, 35B33, 35A01, 35B44}

\begin{abstract} 
This paper investigates the critical behavior of global solutions to a parabolic equation with a Hartree-type nonlinearity of the form
$$
\left\{\begin{array}{ll}
u_{t}+(-\Delta)^{\frac{\beta}{2}} u= (\mathcal{K}\ast |u|^{p})|u|^{q},&\qquad x\in \mathbb{R}^n,\,\,\,t>0,\\\\
 u(x,0)=u_{0}(x),& \qquad x\in \mathbb{R}^n,\\
 \end{array}
 \right.
 $$
where $\beta\in(0,2]$, $n\geq1$, $p>1$, $q\geq 1$, $(-\Delta)^{\frac{\beta}{2}},\,\beta\in(0,2)$ denotes the fractional Laplacian, the symbol $\ast$ denotes the convolution operation in $\mathbb{R}^n$, and $\mathcal{K}:(0,\infty)\rightarrow(0,\infty)$ is a continuous function such that $\mathcal{K}(|\cdotp|)\in L^1_{\hbox{\tiny{loc}}}(\mathbb{R}^n)$ and is monotonically decreasing in a neighborhood of infinity. We establish conditions for the global nonexistence of solutions to the problem under consideration, thereby partially improving some results of Filippucci and Ghergu in [Discrete Contin. Dyn.
Syst. A, 42 (2022) 1817-1833] and [Nonlinear Anal., 221 (2022) 112881]. In addition, we establish local and global existence results in the case where the convolution term corresponds to the Riesz potential. Our methodology relies on the nonlinear capacity method and the fixed-point principle, combined with the Hardy–Littlewood–Sobolev inequality.
\end{abstract}

\maketitle

\section{Introduction}
This study is devoted to the analysis of global properties of the solutions for a class of semilinear parabolic equations involving nonlinear space-convolution term
\begin{equation}\label{1}
\left\{\begin{array}{ll}
u_{t}+(-\Delta)^{\frac{\beta}{2}} u= (\mathcal{K}\ast |u|^{p})|u|^{q},&\qquad x\in \mathbb{R}^n,\,\,\,t>0,\\\\
 u(x,0)=u_{0}(x),& \qquad x\in \mathbb{R}^n,\\
 \end{array}
 \right.
\end{equation}
where $\beta\in(0,2]$, $n\geq1$, $p,q>0$, $u_0\in L^1_{\hbox{\tiny{loc}}}(\mathbb{R}^n),$ and the fractional Laplacian 
$(-\Delta)^{\frac{\beta}{2}}$, with $0<\beta<2$, is defined in \eqref{fractional} below. The function $\mathcal{K}:(0,\infty)\rightarrow(0,\infty)$ is continuous, satisfies $\mathcal{K}(|\cdotp|)\in L^1_{\hbox{\tiny{loc}}}(\mathbb{R}^n)$ and there exists $R_0>1$ such that $\inf\limits_{r\in(0,R)}\mathcal{K}(r)=\mathcal{K}(R)$ for all $R>R_0$. The nonlinear convolution term $\mathcal{K}\ast|u|^p$ is the convolution between $\mathcal{K}$ and $|u|^p$ defined by
\begin{align*}(\mathcal{K}\ast |u|^p)(x)&=\int_{\mathbb{R}^n}\mathcal{K}(|x-y|)|u(y)|^p\,dy\\&=\int_{\mathbb{R}^n}\mathcal{K}(|y|)|u(x-y)|^p\,dy.\end{align*}
Typical examples of $\mathcal{K}$ are the constant functions as well as
$$\mathcal{K}(r)=r^{-\sigma},\quad\sigma\in(0,n)\qquad\hbox{or}\qquad \mathcal{K}(r)=r^{-\sigma}\ln^{\delta}(1+r),\quad\sigma\in(0,n),\,\,\delta>\sigma-n.$$ In case $$\mathcal{K}(r)=A_\sigma r^{-(n-\sigma)},\,\sigma\in(0,n),\,A_\sigma=\frac{\Gamma\left( \frac{n - \sigma}{2} \right)}{\Gamma\left( \frac{\sigma}{2} \right)\pi^{\frac n2}2^\sigma}$$ the convolution $(\mathcal{K}\ast v)(x)$ coincides with the classical Riesz potential $$A_\sigma(|x|^{-(n-\sigma)}\ast v)(x)=I_{\sigma}(v(x))=A_{\sigma}\int_{\mathbb{R}^n}\frac{v(y)}{|x-y|^{n-\sigma}}dy.$$

\subsection{Historical background}
We begin with the pioneering work of Fujita \cite{Fuj}, where the following semilinear heat equation 
\begin{equation}\label{heat}
\left\{
\begin{array}{ll}
\,\,\displaystyle{u_t-\Delta u=|u|^{p}}&\displaystyle {x\in {\mathbb{R}^n},\;t>0,}\\
\\
\displaystyle{u(x,0)=u_0(x)}&\displaystyle{x\in {\mathbb{R}^n},}
\end{array}
\right. \end{equation}
was first studied. Namely, Fujita \cite{Fuj} proved that for $p<p_F=1+2/n,$ no nonnegative global-in-time solution exists for any nontrivial initial data, whereas for $p>p_F$, global positive solutions exist provided the initial data is sufficiently small and nonnegative. The blow-up of nonnegative solutions in the critical case $p = p_F$ was established in \cite{11, 17, 22}. 
The quantity $p_F=1+2/n$ is known in the literature as the critical Fujita exponent for equation \eqref{heat}, characterizing the threshold between global existence and finite-time blow-up of solutions. Space-fractional extensions of Fujita's problem have been studied in many papers (see for example \cite{Fino5, Kirane-Guedda, KT2025, Naga, 22}).

The analysis of convolution terms within the nonlinearities of evolution equations has played a significant role in modeling diverse phenomena in both gravitational theory and quantum physics. For instance, in 1928, Hartree (see \cite{Hartee1,Hartee2,Hartee3}) considered the following equation
\begin{equation}\label{convolution1}
i\psi_{t}-\Delta \psi=(|x|^{\alpha-n}\ast \psi^2)\psi ,\qquad x\in \mathbb{R}^n,\,\,\,t>0,
\end{equation} in connection with the Schr\"{o}dinger equation in quantum physics. The stationary case of \eqref{convolution1} for $n=3$ and $\alpha=2$ is known in the literature as the Choquard equation and was introduced in \cite{Pekar} as a model in quantum theory. The stationary cases of problem \eqref{1} with $\mathcal{K}(r)=r^{\alpha-n},\,\alpha\in(0,n)$, including the critical exponents related to the existence of solutions, have been investigated in \cite{Ghergu2, Ghergu1, Moroz1, Moroz2}.

To the best of our knowledge, the first results on parabolic equations with a Hartree-type nonlinearity, defined via a convolution operation, i.e.
\begin{equation}\label{realnonlocal}
\begin{cases}
u_{t}-\Delta u\geq (\mathcal{K}\ast |u|^p)|u|^q ,\qquad {x\in \mathbb{R}^n,\,\,\,t>0,}
 \\{}\\ u(x,0)=u_{0}(x), \qquad\qquad\qquad x\in \mathbb{R}^n,
 \end{cases}
\end{equation}
were obtained only recently, in 2022, by Filippucci and Ghergu \cite{Filippucci1,Filippucci2}, where $n\geq1$, $p,q>0$, and $u_0\in L^1_{\hbox{\tiny{loc}}}(\mathbb{R}^n)$. Indeed, in \cite{Filippucci1,Filippucci2} the authors investigated even more general frameworks than \eqref{realnonlocal}, encompassing quasilinear operators such as the $m$-Laplacian, or the generalized mean curvature operator. They established that \eqref{realnonlocal} admits no nontrivial global weak solutions whenever $p+q>2$, $u_0\in L^1(\mathbb{R}^n)$,
$$\int_{\mathbb{R}^n}u_0(x)\,dx>0\qquad\hbox{and}\qquad \limsup_{R\longrightarrow\infty}\mathcal{K}(R)\,R^{\frac{2n+2}{p+q}-n}>0.$$
They exemplified these results by examining the case of pure power potentials of the form $$\mathcal{K}(r)=r^{-(n-\alpha)},\,\,\alpha\in(n-1,n),$$ and established that problem \eqref{realnonlocal} admits no global weak solutions whenever the initial data satisfy $$u_0\in L^1(\mathbb{R}^n), \int_{\mathbb{R}^n}u_0(x)\,dx>0$$ and $$2<p+q\leq\frac{2(n+1)}{2n-\alpha}=1+\frac{\alpha+2}{2n-\alpha}.$$ These results were subsequently extended in several directions: for instance, the more general quasilinear case was studied in \cite{Wang}, while extensions of these results to Riemannian manifolds and to Heisenberg groups were investigated in \cite{Jleli1} and \cite{FinoKirane}, respectively. Problems of this type for parabolic equations of the form \eqref{1} in bounded domains with Dirichlet boundary conditions have been extensively studied in \cite{Liu1, Liu2, Zhou1, Zhou2}.

It is also worth noting that Fujita-type results for semilinear parabolic equations with spatially nonlocal nonlinearity of the form $$\left(\int_{\mathbb{R}^n}K(x)u^rdx\right)^{(p-1)/r}u^q,\,\,p,q,r\geq 1$$ were studied in \cite{galak}. Here $K\geq 0$ is a measurable function on $\mathbb{R}^n.$ It has been established that if $K\in L^1(\mathbb{R}^n)$ and $p+q>1,\,r\geq 1$, then the critical exponent is given by $$p_c=2+\frac{2}{n}-q,$$ while when $q=1,$ $K\not\in L^1(\mathbb{R}^n),$ and $$C_1|x|^{-\gamma_1}\leq K(x)\leq C_2|x|^{-\gamma_2},\,\gamma_1\leq n,\,|x|\geq \rho>0,$$ the critical exponent takes the form $$p_c=1+\frac{2r}{n(r-1)+\max\{\gamma_1,0\}}.$$ Related problems in more general settings have been investigated in numerous studies (see, e.g., \cite{Jleli2, Laptev, Nabti, Souplet1, Souplet2, Zheng}).

Motivated by these results, the purpose of the present study is to investigate the global behavior of solutions to the problem \eqref{1}. In particular, our objective is not only to extend previously known results to the setting of the fractional heat equation, but also to refine and strengthen the corresponding results in the case of the classical heat equation (see Remark \ref{remarkA}).

\subsection{Main results}
Let us start with the definition of the weak solution of \eqref{1}.
\begin{definition}\textup{(Weak solution of \eqref{1})}${}$\\
Let $u_0\in L^1_{\hbox{\tiny{loc}}}(\mathbb{R}^n)$ and $T>0$. We say that $u\in L_{\hbox{\tiny{loc}}}^1((0,T)\times\mathbb{R}^n)$ is a weak solution of \eqref{1} on $[0,T)\times\mathbb{R}^n$ if
$$ (\mathcal{K}\ast |u|^p)|u|^q \in L_{\hbox{\tiny{loc}}}^1((0,T)\times\mathbb{R}^n),$$
and
\begin{equation}\label{weaksolution2}\begin{split}
\int_{\mathbb{R}^n}u(\tau,x)\psi(\tau,x)\,dx-\int_{\mathbb{R}^n}u(0,x)\psi(0,x)\,dx&=\int_0^\tau\int_{\mathbb{R}^n}(\mathcal{K}\ast|u|^p)|u|^q\psi(t,x)\,dx\,dt\\&-\int_0^\tau\int_{\mathbb{R}^n}u\,(-\Delta)^{\frac{\beta}{2}}\psi(t,x)\,dx\,dt\nonumber\\
&+\int_0^\tau\int_{\mathbb{R}^n}u\,\psi_t(t,x)\,dx\,dt,
\end{split}\end{equation}
holds for all compactly supported test function $\psi\in C^{1,2}_{t,x}([0,T)\times\mathbb{R}^n)$, and $0\leq\tau<T$. If $T=\infty$,  $u$ is called a global in time weak solution to \eqref{1}.
\end{definition}

\begin{theorem}\label{theo1}
 Let $n\geq1,$ $\beta\in(0,2],$ $p,q>0$ be such that $p+q>2$.
\begin{itemize}
\item[(i)] If $u_0\in L^1(\mathbb{R}^n)$,
$$\int_{\mathbb{R}^n}u_0(x)\,dx>0\qquad\hbox{and}\qquad \limsup_{R\rightarrow\infty}\left(\mathcal{K}(R)\,R^{-n(p+q-2)+\beta}\right)=\infty,$$
then problem \eqref{1} has no global weak solutions.
\item[(ii)] If $ u_0\in L^1_{\hbox{\tiny{loc}}}(\mathbb{R}^n)$,
$$u_0(x)\geq\varepsilon(1+|x|^2)^{-\gamma/2}\,\,\,\,\hbox{and}\,\,\,\,\,
\liminf_{R\longrightarrow\infty}\left(\mathcal{K}(R)^{-1}\,R^{\gamma(p+q-1)-n-\beta}\right)=0,$$
for some positive constant $\varepsilon>0$ and any exponent $\gamma>0$, then problem \eqref{1} has no global weak solutions.\\
\end{itemize}
\end{theorem}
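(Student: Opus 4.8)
The plan is to use the nonlinear capacity method (also called the test-function method of Mitidieri–Pohozaev). Suppose, for contradiction, that a global weak solution $u$ exists. The starting point is to insert into the weak formulation a test function of the form $\psi(t,x) = \varphi_R(t,x) := \Phi\big(R^{-\beta} t + R^{-2}|x|^2\big)^{\ell}$ or, more conveniently, a separated/rescaled cutoff $\psi(t,x)=\eta(t/R^{\beta})^{\ell}\,\phi(x/R)^{\ell}$ where $\eta,\phi$ are smooth, $\eta\equiv1$ on $[0,1]$, $\phi\equiv1$ on the unit ball, both supported in balls of radius $2$, and $\ell$ is chosen large so that $\psi^{-1/(p+q-1)}$ absorbs the derivatives. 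Passing $\tau\to T$ and using $\psi(\tau,\cdot)\to 0$ (for part (i), where $T$ can be taken finite only after we also know the solution is global; here $T=\infty$ and $\psi(\tau,\cdot)=0$ for $\tau$ large), one obtains
\begin{equation}\label{capest}
\int_0^\infty\!\!\int_{\mathbb{R}^n}(\mathcal{K}\ast|u|^p)|u|^q\,\psi\,dx\,dt + \int_{\mathbb{R}^n}u_0\,\psi(0,\cdot)\,dx \le \int_0^\infty\!\!\int_{\mathbb{R}^n}|u|\,\big(|\psi_t| + |(-\Delta)^{\beta/2}\psi|\big)\,dx\,dt.
\end{equation}
The two key technical inputs are: the estimate $|(-\Delta)^{\beta/2}\psi|\le C R^{-\beta}\psi^{1-1/(p+q)}$ pointwise (a standard but nontrivial fact for the fractional Laplacian of a scaled cutoff; for $\beta=2$ this is immediate, and for $\beta\in(0,2)$ it follows from the scaling $(-\Delta)^{\beta/2}\psi(x)=R^{-\beta}\big((-\Delta)^{\beta/2}\Psi\big)(x/R)$ plus the decay of $(-\Delta)^{\beta/2}\Psi$), and likewise $|\psi_t|\le C R^{-\beta}\psi^{1-1/(p+q)}$.

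The second essential step is to bound the right-hand side of \eqref{capest} by the left-hand side. I would apply Young's (or Hölder's) inequality in the form
$$
|u|\,R^{-\beta}\psi^{1-\frac{1}{p+q}} \le \frac{1}{2}\,(\mathcal{K}\ast|u|^p)^{\frac{q}{p+q}}|u|^{q}\,\psi \;\cdot\; \big[(\mathcal{K}\ast|u|^p)^{-\frac{q}{p+q}}R^{-\beta}\psi^{-\frac{1}{p+q}}\big] \;+\;\dots
$$
but a cleaner route is the two-step splitting used by Filippucci–Ghergu: first write $|u| = \big((\mathcal{K}\ast|u|^p)|u|^q\big)^{1/(p+q)}\cdot (\mathcal{K}\ast|u|^p)^{-1/(p+q)}|u|^{1-q/(p+q)}$ and apply Hölder with exponents $p+q$ and $(p+q)/(p+q-1)$ in $(t,x)$, to get
$$
\text{RHS of \eqref{capest}} \le \frac12\iint (\mathcal{K}\ast|u|^p)|u|^q\psi + C R^{-\beta\frac{p+q}{p+q-1}} \iint_{\mathrm{supp}\,\psi} (\mathcal{K}\ast|u|^p)^{-\frac{1}{p+q-1}}|u|^{1-\frac{q}{p+q-1}}.
$$
The first term is absorbed. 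For the remaining term, the nonlocal factor $(\mathcal{K}\ast|u|^p)^{-1/(p+q-1)}$ is the crux: one must bound it from above using the hypothesis on $\mathcal{K}$. Using the monotonicity of $\mathcal{K}$ near infinity (the condition $\inf_{r<R}\mathcal{K}(r)=\mathcal{K}(R)$ for $R>R_0$) together with the fact that for $x$ in the support of $\psi$ and $y$ in the support of $|u|^p$-mass that matters one has $|x-y|\lesssim R$, one gets $(\mathcal{K}\ast|u|^p)(x) \ge \mathcal{K}(cR)\int_{B_{cR}}|u|^p$; this is where $\mathcal{K}(R)$ enters. Then another Hölder in the $|u|^p$ integral (with exponent $p/(1-q/(p+q-1))$ against the scaled-cutoff measure of total size $R^{n+\beta}$) re-assembles everything into $\big(\iint (\mathcal{K}\ast|u|^p)|u|^q\psi\big)^{\theta}$ times a power of $R$, for a suitable $\theta\in(0,1)$.

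Collecting exponents, one arrives at an inequality of the shape
$$
J_R + \int_{\mathbb{R}^n}u_0\,\psi(0,\cdot)\,dx \;\le\; C\,\mathcal{K}(cR)^{-\kappa}\,R^{\lambda}\,J_R^{\theta},\qquad J_R:=\iint(\mathcal{K}\ast|u|^p)|u|^q\psi,
$$
and after absorbing $J_R^\theta$ (Young again) this yields
$$
\int_{\mathbb{R}^n}u_0\,\psi(0,\cdot)\,dx \;\le\; C\,\big(\mathcal{K}(cR)^{-1} R^{\lambda'}\big)^{\kappa'},
$$
where the arithmetic of the scaling gives exactly $\lambda' = n(p+q-2)-\beta$ in the normalization of part (i) (so the hypothesis $\limsup \mathcal{K}(R)R^{-n(p+q-2)+\beta}=\infty$ forces the right-hand side to tend to $0$ along a subsequence), while in part (ii) the lower bound $u_0\ge\varepsilon(1+|x|^2)^{-\gamma/2}$ makes $\int u_0\psi(0,\cdot)\gtrsim \varepsilon R^{n-\gamma}$ (for $\gamma<n$; if $\gamma\ge n$ one gets a logarithm or a bounded quantity), and comparing with $\big(\mathcal{K}(cR)^{-1}R^{n(p+q-1)-n-\beta}\big)^{\kappa'}$ under the $\liminf=0$ hypothesis again produces a contradiction by letting $R\to\infty$ along the appropriate subsequence. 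For part (i), once the right-hand side $\to 0$ we conclude $\int_{\mathbb{R}^n}u_0\,dx\le 0$ (using $\psi(0,\cdot)\to \mathbbm{1}$ and dominated convergence, $u_0\in L^1$), contradicting the assumption $\int u_0>0$; moreover $J_R\to 0$ forces $(\mathcal{K}\ast|u|^p)|u|^q\equiv 0$, hence $u\equiv0$, contradicting nontriviality.

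I expect the main obstacle to be twofold. First, the careful bookkeeping of exponents through the two nested Hölder inequalities (one in $(t,x)$, one inside the convolution in $y$) so that the final power of $R$ is precisely $n(p+q-2)-\beta$ — this requires $p+q>2$ to make the relevant Hölder exponents admissible and to ensure $1 - q/(p+q-1)>0$ may fail, so one may instead need to keep $|u|^{1-q/(p+q-1)}$ and handle the sign of its exponent by splitting cases $q\lessgtr p+q-1$, i.e. $p\lessgtr 1$ versus $p>1$; since the paper assumes $p>1$ this is consistent but must be tracked. Second, and more delicate, is the lower estimate for the nonlocal term $(\mathcal{K}\ast|u|^p)(x)$ on the support of the test function: one needs that the "mass" of $|u|^p$ contributing to the convolution lies within distance $\sim R$ of $x$, which is automatic because $x$ ranges over $B_{2R}$ and we only integrate $y$ where $\psi$ or the relevant factors are supported, but turning $\mathcal{K}(|x-y|)\ge \mathcal{K}(cR)$ into a clean bound requires the monotonicity-near-infinity hypothesis and a separate treatment of the region $|x-y|\le R_0$ (where $\mathcal{K}$ need only be $L^1_{\mathrm{loc}}$, not monotone), which contributes a lower-order term. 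Handling that bounded-distance region without losing the scaling is the step I would be most careful about.
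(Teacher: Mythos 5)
Your proposal follows the Filippucci--Ghergu route: H\"older with exponents $p+q$ and $(p+q)/(p+q-1)$ applied to $|u|$ against the \emph{full} nonlinearity $(\mathcal{K}\ast|u|^p)|u|^q\psi$, followed by a lower bound on the convolution and a second H\"older to reassemble the leftover factor $(\mathcal{K}\ast|u|^p)^{-1/(p+q-1)}|u|^{1-q/(p+q-1)}$. The problem is that this route, when the exponent bookkeeping is actually carried out, produces the Filippucci--Ghergu condition $\limsup_R \mathcal{K}(R)R^{(-n(p+q-2)+\beta)/(p+q)}>0$, which is strictly weaker than the hypothesis $\limsup_R\mathcal{K}(R)R^{-n(p+q-2)+\beta}=\infty$ in the theorem (see Remark \ref{remarkA}: the whole point of the paper is that the new condition is implied by, and goes beyond, the old one). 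Your assertion that ``the arithmetic of the scaling gives exactly $\lambda'=n(p+q-2)-\beta$'' is not substantiated, and I do not believe it holds for the decomposition you describe; the $1/(p+q)$ loss in the exponent is intrinsic to applying H\"older at exponent $p+q$.

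The missing idea is the symmetrization step. The paper lower-bounds
$\int_{\mathcal{B}}(\mathcal{K}\ast|u|^p)|u|^q\psi\,dx \ge \mathcal{K}(2R)\iint |u(y)|^p\psi(y)\,|u(x)|^q\psi(x)\,dy\,dx$
and then uses Cauchy--Schwarz on the double integral to replace the asymmetric product $|u(y)|^p|u(x)|^q$ by $(|u(y)||u(x)|)^{(p+q)/2}$, so the left-hand side controls $\mathcal{K}(2R)\,K^2(t)$ with $K(t)=\int u^{(p+q)/2}\psi\,dx$. This quadratic structure is what permits applying H\"older to the derivative terms at the \emph{half} exponent $(p+q)/2$ (against $u^{(p+q)/2}\psi$, not against the full nonlinearity), followed by one more Cauchy--Schwarz in $t$ relating $\int_0^T K$ to $(\int_0^T K^2)^{1/2}$; after choosing $R=T^{1/\beta}$ and one Young inequality, the full exponent $-n(p+q-2)+\beta$ on $R$ survives. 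Without this step your scheme cannot reach the stated hypothesis. Two smaller points: (a) the leftover factor $|u|^{1-q/(p+q-1)}$ in your splitting has exponent of indeterminate sign and a negative power of $\int_B|u|^p$ survives your second H\"older, which you leave unresolved; the paper's symmetrization avoids this entirely. (b) No separate treatment of the region $|x-y|\le R_0$ is needed: the standing hypothesis is $\inf_{r\in(0,R)}\mathcal{K}(r)=\mathcal{K}(R)$ for all $R>R_0$, which gives $\mathcal{K}(|x-y|)\ge\mathcal{K}(2R)$ for \emph{all} $x,y\in\mathcal{B}$ once $2R>R_0$, with no exceptional set.
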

\begin{remark}\label{remarkA} Noting that for the specimen choices of $\mathcal{K}$, such as $\mathcal{K}(r)=r^{-(n-\alpha)}$ with $\alpha\in(0,n)$, one can easily check that 
$$ \limsup_{R\rightarrow\infty}\left(\mathcal{K}(R)\,R^{\frac{-n(p+q-2)+\beta}{p+q}}\right)>0$$ implies $$\limsup_{R\rightarrow\infty}\left(\mathcal{K}(R)\,R^{-n(p+q-2)+\beta}\right)=\infty.$$
This verifies that our result is stronger than those obtained by Filippucci and Ghergu \cite{Filippucci1,Filippucci2}.
\end{remark}

\noindent Note that, when 
$$\mathcal{K}(r)=A_\alpha r^{-(n-\alpha)},\quad\alpha\in(0,n),\qquad A_\alpha= \frac{\Gamma\left( \frac{n - \alpha}{2} \right)}{\Gamma\left( \frac{\alpha}{2} \right)\pi^{\frac n2}2^\alpha},$$ that is
\begin{equation}\label{2}
\begin{cases}
u_{t}+(-\Delta)^{\frac{\beta}{2}} u= I_\alpha(|u|^p)|u|^{q} ,\qquad {\,x\in \mathbb{R}^n,\,\,\,t>0,}
 \\{}\\ u(x,0)=u_{0}(x), \qquad\qquad\qquad\qquad\quad x\in \mathbb{R}^n,
 \end{cases}
\end{equation}
and due to the fact that
$$\limsup_{R\rightarrow\infty}\left(\mathcal{K}(R)\,R^{-n(p+q-2)+\beta}\right)=\infty\Longleftrightarrow\lim_{R\longrightarrow\infty}R^{-n(p+q-1)+\alpha+\beta}=\infty\Longleftrightarrow-n(p+q-1)+\alpha+\beta >0,
$$
and
$$
\liminf_{R\longrightarrow\infty}\left(\mathcal{K}(R)^{-1}\,R^{\gamma(p+q-1)-n-\beta}\right)=0\Longleftrightarrow\lim_{R\longrightarrow\infty}R^{\gamma(p+q-1)-\alpha-\beta}=0\Longleftrightarrow\gamma(p+q-1)-\alpha-\beta<0,
$$
we have the following
\begin{theorem}\label{cor1}
Let $n\geq1,$ $p,q>0$, $\alpha\in(0,n)$, $\beta\in(0,2]$.\\
\begin{itemize}
\item[(i)] Assume that $\alpha+\beta>n$ and let $u_0\in L^1(\mathbb{R}^n)$ be such that
$$\int_{\mathbb{R}^n}u_0(x)\,dx>0.$$
If
$$2<p+q< 1+\frac{\beta+\alpha}{n},$$
then problem \eqref{2} has no global weak solutions.
\item[(ii)]  If 
$$u_0(x)\geq\varepsilon(1+|x|^2)^{-\gamma/2}\,\,\,\,\hbox{and}\,\,\,\,\,
2<p+q<1+\frac{\beta+\alpha}{\gamma},$$
for some positive constant $\varepsilon>0$ and any exponent $\gamma\in(0,\alpha+\beta)$, then problem \eqref{2} has no global weak solutions.
\end{itemize}
\end{theorem}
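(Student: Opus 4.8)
The plan is to obtain Theorem~\ref{cor1} as a direct specialization of Theorem~\ref{theo1} to the Riesz kernel $\mathcal{K}(r)=A_\alpha r^{-(n-\alpha)}$, so that the work reduces to checking that this kernel satisfies the standing hypotheses on $\mathcal{K}$ and that the $\limsup/\liminf$ conditions of Theorem~\ref{theo1} translate into the stated power inequalities.

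First I would confirm that $\mathcal{K}(r)=A_\alpha r^{-(n-\alpha)}$ is admissible: it is continuous and strictly positive on $(0,\infty)$; since $-(n-\alpha)>-n$ whenever $\alpha\in(0,n)$, the function $x\mapsto|x|^{-(n-\alpha)}$ is integrable on bounded sets, so $\mathcal{K}(|\cdot|)\in L^1_{\mathrm{loc}}(\mathbb{R}^n)$; and $r\mapsto r^{-(n-\alpha)}$ is decreasing on all of $(0,\infty)$, hence $\inf_{r\in(0,R)}\mathcal{K}(r)=\mathcal{K}(R)$ for every $R$, in particular for all $R>R_0$ with any fixed $R_0>1$. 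Thus Theorem~\ref{theo1} applies to problem~\eqref{2}.

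Next I would use the elementary identities
\[
\mathcal{K}(R)\,R^{-n(p+q-2)+\beta}=A_\alpha\,R^{-n(p+q-1)+\alpha+\beta},\qquad
\mathcal{K}(R)^{-1}\,R^{\gamma(p+q-1)-n-\beta}=A_\alpha^{-1}\,R^{\gamma(p+q-1)-\alpha-\beta},
\]
which show that the positive constant $A_\alpha$ does not affect the asymptotics. For part (i): the hypothesis $\limsup_{R\to\infty}\big(\mathcal{K}(R)R^{-n(p+q-2)+\beta}\big)=\infty$ of Theorem~\ref{theo1}(i) is then equivalent to $-n(p+q-1)+\alpha+\beta>0$, i.e. $p+q<1+\frac{\alpha+\beta}{n}$; together with the standing requirement $p+q>2$ this is exactly $2<p+q<1+\frac{\alpha+\beta}{n}$, and this range is nonempty precisely when $\alpha+\beta>n$. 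Since $u_0\in L^1(\mathbb{R}^n)$ with $\int_{\mathbb{R}^n}u_0\,dx>0$ is assumed, Theorem~\ref{theo1}(i) gives the nonexistence of global weak solutions, proving (i).

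For part (ii) the argument is the same: $\liminf_{R\to\infty}\big(\mathcal{K}(R)^{-1}R^{\gamma(p+q-1)-n-\beta}\big)=0$ in Theorem~\ref{theo1}(ii) is equivalent to $\gamma(p+q-1)-\alpha-\beta<0$, i.e. $p+q<1+\frac{\alpha+\beta}{\gamma}$; with $p+q>2$ this reads $2<p+q<1+\frac{\alpha+\beta}{\gamma}$, a nonempty range exactly when $\gamma<\alpha+\beta$, i.e. $\gamma\in(0,\alpha+\beta)$. The pointwise bound $u_0(x)\ge\varepsilon(1+|x|^2)^{-\gamma/2}$ is identical to the one in Theorem~\ref{theo1}(ii), so that result applies and yields (ii). Since the whole proof is a reduction, I do not anticipate any real obstacle; the only points needing (minor) care are verifying the local integrability and eventual monotonicity of the Riesz kernel and observing that the normalization constant $A_\alpha$ is immaterial to the scaling conditions.
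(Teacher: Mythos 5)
Your proposal is correct and follows exactly the paper's route: the paper derives Theorem~\ref{cor1} from Theorem~\ref{theo1} by substituting $\mathcal{K}(r)=A_\alpha r^{-(n-\alpha)}$ and noting the same equivalences $\mathcal{K}(R)R^{-n(p+q-2)+\beta}=A_\alpha R^{-n(p+q-1)+\alpha+\beta}$ and $\mathcal{K}(R)^{-1}R^{\gamma(p+q-1)-n-\beta}=A_\alpha^{-1}R^{\gamma(p+q-1)-\alpha-\beta}$, which turn the $\limsup/\liminf$ hypotheses into the stated power inequalities. Your additional check that the Riesz kernel satisfies the standing assumptions on $\mathcal{K}$ (positivity, local integrability, monotonicity) is a small but welcome extra that the paper leaves implicit.
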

\begin{remark}\label{remark5} Theorem \ref{cor1} leads directly to the following conclusions:
\begin{description}
\item[(i)] In \cite{Filippucci1,Filippucci2}, Filippucci and Ghergu established that problem (for $\beta=2$) \eqref{2} admits no global solutions whenever $$2<p+q\leq 1+\frac{2+\alpha}{2n-\alpha},\quad\alpha\in(n-1,n),$$ but in Theorem \ref{cor1} we have the same result for $$2<p+q<1+\frac{2+\alpha}{n},\quad\alpha\in(n-2,n).$$ Since $$1+\frac{2+\alpha}{2n-\alpha}<1+\frac{2+\alpha}{n},$$ part (i) of Theorem \ref{cor1} strengthens this result by extending the admissible range of $p+q$ and $\alpha$.
\item[(ii)] It is worth noting that choosing $\gamma<n<\alpha+\beta$ allows Theorem \ref{cor1}-(ii) to extend the nonexistence result  from Theorem \ref{cor1}-(i). \end{description}
\end{remark}
Next, we give the definition of the mild solution of \eqref{2}.

\begin{definition}  A function $u\in L^\infty((0,T),X)$ is called a {\bf mild solution} of $\eqref{2}$ if $u$ has the initial data $u_0$ and satisfies the integral equation
	\begin{equation}\label{brandaq}
		u(t)=S_{\beta}(t)u_0 + \int_{0}^{t}S_{\beta}(t-\tau ) I_\alpha(|u|^{p})|u(\tau)|^{q} \,\mathrm{d}\tau.
	\end{equation}
\end{definition}
\begin{theorem}[Local existence]\label{localexistenceq}
	Let $\beta\in(0,2]$, $\alpha\in(0,n)$, $p>n/(n-\alpha)$, $q\geq1$, and $u_0 \in L^s(\mathbb{R}^{n})\cap L^\infty(\mathbb{R}^{n})$ with $n/(n-\alpha)<s<n(p-1)/\alpha$. Then there exists a time $T=T(u_0)>0$ such that problem \eqref{2} possesses a unique mild solution 
	$$u\in C([0,T],L^s(\mathbb{R}^n))\cap L^\infty((0,T),L^\infty(\mathbb{R}^{n})).$$
	 Moreover, the following properties hold:
	\begin{itemize}
		\item[$\mathrm{(i)}$] Problem \eqref{2} possesses a maximal mild solution $$u\in C([0,T_{\max}),L^s(\mathbb{R}^{n}))\cap L^\infty((0,T_{\max}),L^\infty(\mathbb{R}^{n}))$$ with $u(0)=u_0$, where $T_{\max}=T_{\max}(u_0)\leq\infty$. Furthermore, either $T_{\max}=\infty$, or else $T_{\max}<\infty$ and
	\begin{equation}\label{limblowupq}
			\lim_{t\rightarrow T_{\max}^-}(\|u(t)\|_{L^\infty(\mathbb{R}^{n})}+\|u(t)\|_{L^s(\mathbb{R}^{n})})=+\infty.
		\end{equation}
		\item[$\mathrm{(ii)}$] If $u_0\geq0$, then then the mild solution $u\geq0$ remains nonnegative for all $t\in[0,T]$.
	\end{itemize}
\end{theorem}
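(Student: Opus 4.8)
The plan is to solve the integral equation \eqref{brandaq} by the Banach fixed point theorem. Write $F(u):=I_\alpha(|u|^p)|u|^q$ and consider the Duhamel map
$$
\Phi(u)(t):=S_\beta(t)u_0+\int_0^t S_\beta(t-\tau)F(u(\tau))\,d\tau
$$
on the complete metric space $Y_{T,M}:=\{u\in C([0,T],L^s(\mathbb{R}^n))\cap L^\infty((0,T),L^\infty(\mathbb{R}^n)):\|u\|_T\le M\}$ with the distance induced by the norm $\|u\|_T:=\sup_{0\le t\le T}(\|u(t)\|_{L^s}+\|u(t)\|_{L^\infty})$, for constants $M,T>0$ to be fixed later. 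The analytic ingredients are: (a) the $L^a$--$L^b$ smoothing/contraction estimates $\|S_\beta(t)f\|_{L^b}\le C\,t^{-\frac n\beta(\frac1a-\frac1b)}\|f\|_{L^a}$ for $1\le a\le b\le\infty$, together with positivity preservation of $S_\beta(t)$, the $L^a$-contraction property, and $S_\beta(t)f\to f$ in $L^a$ as $t\to0^+$; (b) the Hardy--Littlewood--Sobolev inequality $\|I_\alpha g\|_{L^{r}}\le C\|g\|_{L^{m}}$ for $1<m<r<\infty$ with $\frac1r=\frac1m-\frac\alpha n$; (c) Hölder's inequality and the interpolation bound $\|v\|_{L^r}\le\|v\|_{L^s}^{s/r}\|v\|_{L^\infty}^{1-s/r}$ valid for $s\le r\le\infty$.

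The core of the argument is the pair of nonlinear estimates: there is $C=C(n,\alpha,p,q,s)>0$ such that, for all $u,v\in Y_{T,M}$,
$$
\|F(u)\|_{L^s}+\|F(u)\|_{L^\infty}\le C\,\|u\|_T^{\,p+q}
$$
and
$$
\|F(u)-F(v)\|_{L^s}+\|F(u)-F(v)\|_{L^\infty}\le C\big(\|u\|_T+\|v\|_T\big)^{p+q-1}\|u-v\|_T.
$$
To bound $\|F(u)\|_{L^s}$ one writes $\|F(u)\|_{L^s}\le\|I_\alpha(|u|^p)\|_{L^{s_1}}\||u|^q\|_{L^{s_2}}$ with $\tfrac1s=\tfrac1{s_1}+\tfrac1{s_2}$, applies (b) to obtain $\|I_\alpha(|u|^p)\|_{L^{s_1}}\le C\|u\|_{L^{pm}}^p$ with $\tfrac1{s_1}=\tfrac1m-\tfrac\alpha n$, and concludes with (c); here the exponents can be chosen with $1<m<n/\alpha$, $pm\ge s$ and $qs_2\ge s$ exactly because $p>n/(n-\alpha)$, $q\ge1$ and $n/(n-\alpha)<s<n(p-1)/\alpha$. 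To bound $\|F(u)\|_{L^\infty}$ one uses $\|F(u)\|_{L^\infty}\le\|I_\alpha(|u|^p)\|_{L^\infty}\|u\|_{L^\infty}^q$ and estimates $\|I_\alpha(|u|^p)\|_{L^\infty}$ by splitting the Riesz kernel at $|y|=1$: the near part is dominated by $\|u\|_{L^\infty}^p$, the far part by $\|u\|_{L^{pm}}^p$ via Hölder (using $m<n/\alpha$ so that $|y|^{-(n-\alpha)}$ is integrable to the conjugate power at infinity). The difference estimates follow the same scheme, after writing $F(u)-F(v)=I_\alpha(|u|^p)(|u|^q-|v|^q)+I_\alpha(|u|^p-|v|^p)|v|^q$ and using $\big||w_1|^\theta-|w_2|^\theta\big|\le\theta(|w_1|^{\theta-1}+|w_2|^{\theta-1})|w_1-w_2|$ for $\theta\in\{p,q\}$, which is legitimate since $p,q\ge1$.

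Granting these, since $\|S_\beta(t)u_0\|_{L^s}\le\|u_0\|_{L^s}$ and $\|S_\beta(t)u_0\|_{L^\infty}\le\|u_0\|_{L^\infty}$, estimating the Duhamel term crudely in time (no semigroup smoothing is needed, because $F(u)$ already lies in $L^\infty((0,T),L^s\cap L^\infty)$) yields
$$
\|\Phi(u)\|_T\le\|u_0\|_{L^s}+\|u_0\|_{L^\infty}+CTM^{p+q},\qquad \|\Phi(u)-\Phi(v)\|_T\le CT(2M)^{p+q-1}\|u-v\|_T.
$$
Choosing $M:=2(\|u_0\|_{L^s}+\|u_0\|_{L^\infty})$ and then $T=T(\|u_0\|_{L^s}+\|u_0\|_{L^\infty})>0$ small enough that $CTM^{p+q}\le M/2$ and $CT(2M)^{p+q-1}\le\tfrac12$, the map $\Phi$ sends $Y_{T,M}$ into itself and is a contraction, so it has a unique fixed point, which is the desired mild solution. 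Continuity in $L^s$ up to $t=0$ follows from $S_\beta(\cdot)u_0\in C([0,\infty),L^s(\mathbb{R}^n))$ and from continuity of the Duhamel term, and uniqueness in the full class $C([0,T],L^s)\cap L^\infty((0,T),L^\infty)$ is obtained from the difference estimate on short subintervals together with a Gronwall-type bootstrap. For part (i), the existence time $T$ depends only on $\|u_0\|_{L^s}+\|u_0\|_{L^\infty}$, so the usual continuation argument applies: set $T_{\max}:=\sup\{T:\text{a solution in the above class exists on }[0,T]\}$; if $T_{\max}<\infty$ and $\limsup_{t\to T_{\max}^-}(\|u(t)\|_{L^s}+\|u(t)\|_{L^\infty})<\infty$, restarting the construction from $u(t_0)$ with $t_0$ close to $T_{\max}$ extends the solution beyond $T_{\max}$, a contradiction; this yields \eqref{limblowupq}. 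For part (ii), when $u_0\ge0$ run the Picard iteration $u^{(k+1)}=\Phi(u^{(k)})$ starting from $u^{(0)}=S_\beta(\cdot)u_0$: the fractional heat kernel and the Riesz kernel are nonnegative and $|u^{(k)}|^p,|u^{(k)}|^q\ge0$, so every iterate is nonnegative, and since $u^{(k)}\to u$ in $Y_{T,M}$ the fixed point satisfies $u\ge0$ a.e.\ on $[0,T]\times\mathbb{R}^n$.

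The main obstacle is the nonlinear estimate of the second paragraph: one must produce a single admissible choice of the intermediate exponents $m,s_1,s_2$ (and their analogues in the difference bound) that meets all of the Hölder, Hardy--Littlewood--Sobolev and interpolation constraints at once, and verify that this is possible precisely in the stated ranges — controlling the $L^\infty$ norm of the Riesz potential $I_\alpha(|u|^p)$ of an $L^s\cap L^\infty$ function is what pins down the requirements $p>n/(n-\alpha)$ and $s<n(p-1)/\alpha$. Everything else is the routine contraction-mapping and continuation machinery.
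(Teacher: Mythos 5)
Your proposal is correct and follows the same overall strategy as the paper: a Banach fixed-point argument in the ball of $L^\infty((0,T),L^s\cap L^\infty)$, with the Hardy--Littlewood--Sobolev inequality and the interpolation $\|v\|_{L^r}\le\|v\|_{L^s}^{s/r}\|v\|_{L^\infty}^{1-s/r}$ supplying the nonlinear estimates (the paper's choice is exactly your $s_2=\infty$, $s_1=s$, $1/m=\alpha/n+1/s$, which is where the hypotheses $s>n/(n-\alpha)$ and $s<n(p-1)/\alpha$ enter), followed by the standard continuation argument for the blow-up alternative and a positive-cone/iteration argument for nonnegativity. The one place where you genuinely deviate is the $L^\infty$ component of the Duhamel term: since HLS fails at the endpoint exponent $\infty$, the paper keeps the nonlinearity in $L^r$ for a large finite $r$ (chosen so that $n/(\beta r)<1$ and the HLS exponent $\tilde q$ is admissible) and then invokes the $L^r\to L^\infty$ smoothing $\|S_\beta(t-\tau)f\|_{L^\infty}\le C(t-\tau)^{-n/(\beta r)}\|f\|_{L^r}$, picking up a factor $T^{1-n/(\beta r)}$; you instead bound $\|I_\alpha(|u|^p)\|_{L^\infty}$ directly by splitting the Riesz kernel at $|y|=1$, which works because $|y|^{-(n-\alpha)}$ is locally integrable and lies in $L^{m'}(\{|y|\ge1\})$ for $m<n/\alpha$, and the constraint $s/p<n/\alpha$ needed to interpolate $\|u\|_{L^{pm}}$ is implied by $s<n(p-1)/\alpha$. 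Your route is slightly more elementary (no smoothing estimate needed, a clean factor $T$), while the paper's route is the one that generalizes when one only has integrability, not boundedness, of the solution; both are valid under the stated hypotheses, and the remaining steps (uniqueness in the full class, continuity via the regularity of the Duhamel term, maximality, positivity) match the paper's Steps 2--5.
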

It is straightforward to verify that if \( u(t, x) \) is a solution of equation \eqref{2} with initial data \( u_0 \), then for all \( \lambda > 0 \), the rescaled function
\[
u_\lambda(t,x)=\lambda^{\frac{\beta+\alpha}{p+q - 1}} u(\lambda^\beta t, \lambda x)
\]
is also a solution, with initial value \( u_\lambda(0,x)=\lambda^{\frac{\beta+\alpha}{p+q - 1}} u_0(\lambda x) \). Moreover, the $L^q$-norm of the rescaled initial data satisfies
\[
\left\|u_\lambda (0) \right\|_{L^{q^{*}}} = \lambda^{\frac{\beta+\alpha}{p+q - 1} - \frac{n}{q}} \|u_0\|_{L^{q^{*}}}.
\]
This observation shows that the scaling-invariant Lebesgue exponent for equation \eqref{2}  is
 \begin{equation}\label{qscalingq}
q_{\mathrm{sc}} = \frac{n(p+q - 1)}{\beta+\alpha}. 
\end{equation}
One might therefore expect that if \( q_{\mathrm{sc}} > 1 \), that is, \( p+q > p_{\mathrm{sc}} \), where the critical exponent is given by
 \begin{equation}\label{pscalingq}
p_{\mathrm{sc}} = 1 + \frac{\beta+\alpha}{n},
\end{equation}
and if \( \|u_0\|_{L^{q_{\mathrm{sc}}}} \) is sufficiently small, then the corresponding solution should exist globally in time. However, the next result shows that this expectation does not hold.
\begin{theorem}[Global existence]\label{globalq} Let $n\geq1$, $\alpha\in(0,n)$, $0<\beta\leq 2$, $p>1$, and $q\geq1$.
If
 \begin{equation}\label{esti2q}
    p+q> 1+\frac{\beta+\alpha}{n-\alpha},
\end{equation}
and $u_0\in L^{q_{\mathrm{sc}}}(\mathbb{R}^n)\cap L^{\infty}(\mathbb{R}^n)$ (where $q_{\mathrm{sc}}$ is given by \eqref{qscalingq}), then the solution $u\in L^\infty((0,\infty),L^\infty(\mathbb{R}^n))$ of \eqref{2} exists
globally in time, provided that $\|u_0\|_{L^{q_{\mathrm{sc}}}}$ is sufficiently small.
\end{theorem}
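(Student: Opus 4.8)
The plan is to solve the integral equation \eqref{brandaq} by a Banach fixed-point argument in a time-weighted function space adapted to the scaling of \eqref{2}, and then to identify the resulting global solution with the maximal one from Theorem \ref{localexistenceq} via the blow-up criterion \eqref{limblowupq}. The two analytic inputs are the $L^r$--$L^\rho$ smoothing of the fractional heat semigroup, $\|S_\beta(t)f\|_{L^\rho}\le C\,t^{-\frac n\beta(\frac1r-\frac1\rho)}\|f\|_{L^r}$ for $1\le r\le\rho\le\infty$, and the Hardy--Littlewood--Sobolev inequality, $\|I_\alpha g\|_{L^b}\le C\|g\|_{L^a}$ for $1<a<b<\infty$ with $\frac1b=\frac1a-\frac\alpha n$, which I would chain through Hölder's inequality. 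Writing $d:=q_{\mathrm{sc}}=\frac{n(p+q-1)}{\beta+\alpha}$ and fixing a finite exponent $k>d$ with $\mu:=\frac n\beta(\frac1d-\frac1k)$, I would work in
\[
\mathcal X=\Big\{u:(0,\infty)\to L^{d}\cap L^{k}:\ \|u\|_{\mathcal X}:=\sup_{t>0}\|u(t)\|_{L^{d}}+\sup_{t>0}t^{\mu}\|u(t)\|_{L^{k}}<\infty\Big\}
\]
and run the contraction principle on a small ball of $\mathcal X$ for $\Phi(u)(t):=S_\beta(t)u_0+\int_0^tS_\beta(t-\tau)\big(I_\alpha(|u|^p)|u|^q\big)(\tau)\,d\tau$. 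The bound $\|S_\beta(\cdot)u_0\|_{\mathcal X}\le C\|u_0\|_{L^{q_{\mathrm{sc}}}}$ is immediate from contractivity of $S_\beta$ on $L^{d}$ and from the smoothing estimate with $\rho=k$; this is the only place smallness of $\|u_0\|_{L^{q_{\mathrm{sc}}}}$ enters, and it explains why no smallness of $\|u_0\|_{L^\infty}$ is imposed.

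For the nonlinear term, HLS followed by Hölder gives $\|I_\alpha(|u|^p)|u|^q\|_{L^r}\le C\|u\|_{L^{pa}}^{p}\|u\|_{L^{qc}}^{q}$ with $\frac1r=\frac1a-\frac\alpha n+\frac1c$, and interpolating $L^{d}$ against $L^{k}$ yields $\|u(\tau)\|_{L^m}\le\|u\|_{\mathcal X}\,\tau^{-\frac n\beta(\frac1d-\frac1m)}$ for every $m\in[d,k]$. Feeding this into $\Phi$, estimating the time integral by a Beta integral and using the identity $\frac{n(p+q-1)}{d}=\alpha+\beta$, one checks that $\Phi$ reproduces exactly the two weights $0$ and $\mu$ appearing in $\|\cdot\|_{\mathcal X}$ --- this is the scaling invariance of \eqref{2} at work --- whence $\|\Phi(u)\|_{\mathcal X}\le C\|u_0\|_{L^{q_{\mathrm{sc}}}}+C\|u\|_{\mathcal X}^{p+q}$ and $\|\Phi(u)-\Phi(v)\|_{\mathcal X}\le C\big(\|u\|_{\mathcal X}^{p+q-1}+\|v\|_{\mathcal X}^{p+q-1}\big)\|u-v\|_{\mathcal X}$. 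Since $p+q>2$, for $\|u_0\|_{L^{q_{\mathrm{sc}}}}$ small enough this makes $\Phi$ a strict contraction on a small ball, yielding a unique global-in-time solution $u\in\mathcal X$ of \eqref{2}. The hypothesis \eqref{esti2q} is used precisely in carrying out this estimate: one has to choose $k>d$ and then $a,c$ with $1<a<n/\alpha$, $c\ge1$, $pa,qc\in[d,k]$, and $r=(\frac1a-\frac\alpha n+\frac1c)^{-1}$ in a range making all the Beta integrals finite; a bookkeeping of these constraints shows that such a choice is possible under $p+q>1+\frac{\beta+\alpha}{n-\alpha}$ (equivalently $q_{\mathrm{sc}}>\frac n{n-\alpha}$, the natural integrability threshold for the Riesz kernel). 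This bookkeeping --- reconciling the open-range requirement of Hardy--Littlewood--Sobolev, the integrability of the parabolic Duhamel kernel, and the admissible interpolation window $[d,k]$ --- is the step I expect to be the main obstacle.

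It remains to pass from $u\in\mathcal X$ to the stated conclusion. Using $u_0\in L^\infty$ together with the estimate just obtained, a finite iteration of the Duhamel estimate (raising the Lebesgue exponent and gaining additional decay at each step) gives $u\in L^\infty((0,\infty),L^\infty)$. Finally, by uniqueness of mild solutions $u$ coincides on $[0,T_{\max})$ with the maximal solution of Theorem \ref{localexistenceq}; since $\|u(t)\|_{L^\infty}+\|u(t)\|_{L^{s}}$ then stays bounded for $t<T_{\max}$ --- by the preceding bound and interpolation between $L^{q_{\mathrm{sc}}}$ and $L^\infty$ --- the alternative \eqref{limblowupq} forces $T_{\max}=\infty$, which is the claim.
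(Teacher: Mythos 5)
Your proposal is correct and follows essentially the same route as the paper: a Weissler/Fujita--Kato fixed-point argument in a time-weighted Lebesgue space (the paper uses the single norm $\sup_{t>0}t^{\beta^*}\|u(t)\|_{L^{q^*}}$ with $q^*>q_{\mathrm{sc}}$, whereas you additionally carry the unweighted critical norm), with the nonlinearity handled by Hardy--Littlewood--Sobolev plus H\"older, the time integral by a Beta function, smallness entering only through $\|u_0\|_{L^{q_{\mathrm{sc}}}}$, and a final bootstrap to $L^\infty$. The only cosmetic difference is the endgame: you invoke the blow-up alternative of Theorem \ref{localexistenceq}, while the paper identifies the fixed point with the local solution on a short interval and then bootstraps the Duhamel formula on $[T,\infty)$ to reach $L^\infty((0,\infty),L^\infty(\mathbb{R}^n))$.
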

At the same time, Theorem \ref{cor1} immediately implies the following statement regarding the results of a blow-up in finite time:
\begin{theorem}[Blow-up]\label{blowup}
Let $n\geq1,$ $\alpha\in(0,n)$, $\beta\in(0,2],$ $p>1$ and $q\geq 1$.
\begin{itemize}
\item[(i)] Assume that $\alpha+\beta>n$ and let $u_0\in L^1(\mathbb{R}^n)$ be such that
$$\int_{\mathbb{R}^n}u_0(x)\,dx>0.$$
If
$$2<p+q< 1+\frac{\beta+\alpha}{n},$$
then the mild local solutions of problem \eqref{2} blows-up in finite time.
\item[(ii)]  If 
$$u_0(x)\geq\varepsilon(1+|x|^2)^{-\gamma/2}\,\,\,\,\hbox{and}\,\,\,\,\,
2<p+q<1+\frac{\beta+\alpha}{\gamma},$$
for some positive constant $\varepsilon>0$ and any exponent $\gamma\in(0,\alpha+\beta)$, then the mild local solutions of problem \eqref{2} blows-up in finite time.
\end{itemize}
\end{theorem}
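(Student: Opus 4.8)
\textbf{Proof proposal for Theorem \ref{blowup}.}

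The strategy is to obtain Theorem \ref{blowup} as a direct consequence of the nonexistence statement in Theorem \ref{cor1}, using Theorem \ref{localexistenceq} to supply a maximal mild solution together with its blow-up alternative. The key observation is that the hypotheses on $n$, $\alpha$, $\beta$, $p$, $q$ and on the initial datum $u_0$ in parts (i) and (ii) of Theorem \ref{blowup} are, respectively, exactly those of parts (i) and (ii) of Theorem \ref{cor1} (note in particular that $p>1$ and $q\geq1$ imply $p,q>0$, so Theorem \ref{cor1} is applicable). Hence the whole matter reduces to showing that a mild local solution cannot be continued to a global one.

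The first step is to verify that every mild solution of \eqref{2} is a weak solution of \eqref{2} in the sense of the Definition above. Given a mild solution $u\in C([0,T],L^s(\mathbb{R}^n))\cap L^\infty((0,T),L^\infty(\mathbb{R}^n))$ satisfying \eqref{brandaq}, one tests the Duhamel formula against an arbitrary compactly supported $\psi\in C^{1,2}_{t,x}([0,T)\times\mathbb{R}^n)$, applies Fubini's theorem, and uses the self-adjointness of $S_\beta(t)$ together with the generator identity $\partial_t S_\beta(t)=-(-\Delta)^{\beta/2}S_\beta(t)$; an integration by parts in the time variable on $[0,\tau]$ then reproduces the weak identity \eqref{weaksolution2} for every $0\le\tau<T$. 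The interchange of integrals and the membership $I_\alpha(|u|^p)|u|^q\in L^1_{\mathrm{loc}}((0,T)\times\mathbb{R}^n)$ follow from the Hardy–Littlewood–Sobolev inequality applied to $|u|^p$ within the regularity class $C([0,T],L^s)\cap L^\infty((0,T),L^\infty)$, exactly as in the construction carried out in the proof of Theorem \ref{localexistenceq} (this is where the condition $p>n/(n-\alpha)$ ensures that $I_\alpha(|u|^p)$ is finite and locally bounded). Thus a mild solution on $[0,T]$ is, in particular, a weak solution on $[0,T)$.

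Now argue by contradiction. Let $u$ be the maximal mild solution furnished by Theorem \ref{localexistenceq}, defined on $[0,T_{\max})$ with $T_{\max}\leq\infty$, and suppose $T_{\max}=\infty$. By the previous paragraph, $u$ is then a global in time weak solution of \eqref{2}. However, under the hypotheses of Theorem \ref{blowup}(i), respectively (ii), Theorem \ref{cor1}(i), respectively (ii), asserts that \eqref{2} admits no global weak solution — a contradiction. Therefore $T_{\max}<\infty$, and the blow-up alternative \eqref{limblowupq} of Theorem \ref{localexistenceq} gives
\[
\lim_{t\to T_{\max}^-}\bigl(\|u(t)\|_{L^\infty(\mathbb{R}^n)}+\|u(t)\|_{L^s(\mathbb{R}^n)}\bigr)=+\infty,
\]
i.e. the mild local solution blows up in finite time, which is the assertion of Theorem \ref{blowup}.

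I expect the only non-routine point to be the rigorous passage from the mild to the weak formulation in the second step: one must justify the semigroup identities and the Fubini interchange using solely the integrability encoded in $C([0,T],L^s)\cap L^\infty((0,T),L^\infty)$, and check that the Hartree-type nonlinearity is locally integrable in space-time. Once this is settled, the statement follows immediately by combining Theorems \ref{localexistenceq} and \ref{cor1}.
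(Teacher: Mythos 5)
Your proposal is correct and follows exactly the route the paper intends: the paper offers no written proof, merely asserting that Theorem \ref{cor1} ``immediately implies'' the blow-up statement, and the intended argument is precisely your combination of the nonexistence of global weak solutions with the blow-up alternative of Theorem \ref{localexistenceq}. You rightly flag the mild-to-weak passage (Duhamel formula tested against $\psi$, Fubini, self-adjointness and the generator identity for $S_\beta$, plus local integrability of $I_\alpha(|u|^p)|u|^q$ via Hardy--Littlewood--Sobolev) as the one step the paper leaves entirely implicit; your sketch of it is sound.
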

\begin{remark} 
\begin{itemize}
\item By analyzing the global existence result in Theorem \ref{globalq}, one can define $$p^*=1+\frac{\beta+\alpha}{n-\alpha}$$ as the type of critical exponent for problem \eqref{2}. However, from the blow-up result in Theorem \ref{blowup} we have a different exponent $$p_*=1+\frac{\beta+\alpha}{n}.$$ At this time we have a gap for $$p+q\in\left[p_*,p^*\right],$$ hence the question of the Fujita-type critical exponent remains open. Theorems \ref{globalq} and \ref{blowup} give an optimal result only in case $\gamma=n-\alpha$, where the critical exponent is $1+\frac{\beta+\alpha}{n-\alpha}.$
\item Without loss of generality, we may change the term $|u|^{q}$ in Theorems \ref{localexistenceq} and \ref{globalq} can be replaced by $|u|^{q-1}u$ which is a general form of Choquard type.
\end{itemize}
\end{remark}


\section{Preliminaries}
In this section, we present some preliminary knowledge needed in our proofs hereafter. First, we recall that the fundamental solution
$S_\beta=S_\beta(x,t)$ of the linear equation
\begin{equation}\label{linearequ+}
u_t+(-\Delta)^{\beta/2}u=0,\quad \beta\in(0,2],\;
x\in\mathbb{R}^n,\;t>0,
\end{equation}
can be written  via the Fourier transform
as follows
\begin{equation}\label{3.3}
S_\beta(x,t)=\frac{1}{(2\pi)^{n/2}}\int_{\mathbb{R}^n}e^{ix.\xi-t|\xi|^\beta}\,d\xi.
\end{equation}
It is well-known that for each $\beta\in(0,2],$ this function
satisfies
\begin{equation}\label{P_1+}
    S_\beta(1)\in L^\infty(\mathbb{R}^n)\cap
L^1(\mathbb{R}^n),\quad
S_\beta(x,t)\geq0,\quad\int_{\mathbb{R}^n}S_\beta(x,t)\,dx=1,
\end{equation}
\noindent for all $x\in\mathbb{R}^n$ and $t>0.$ Hence, using Young's inequality for the convolution and the following self-similar form  $$S_\beta(x,t)=t^{-n/\beta}S_\beta(xt^{-1/\beta},1),$$ we have
\begin{lemma}[$L^p-L^q$ estimate]\label{Lp-Lqestimate} 
	Let $\beta\in(0,2]$, and $1\leq r\leq q\leq\infty$. Then there exists a positive constant $C$ such that for every $v\in L^r(\mathbb{R}^{n})$, the following inequalities hold
	\begin{eqnarray}
		\|S_\beta(t)\ast v\|_q&\leq &Ct^{-\frac{n}{\beta}(\frac{1}{r}-\frac{1}{q})}\|v\|_r,\qquad t>0,\label{semigroup1}\\
		\|S_\beta(t)\ast v\|_r&\leq& \|v\|_r,\qquad t>0\label{semigroup2}
	\end{eqnarray}
\end{lemma}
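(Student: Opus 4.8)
The plan is to derive both inequalities directly from Young's convolution inequality, exploiting the scaling of the kernel $S_\beta$ together with the integrability and normalization properties recorded in \eqref{P_1+}; the estimate \eqref{semigroup1} is the substantive one, and \eqref{semigroup2} falls out as the endpoint case $r=q$. First I would fix the auxiliary exponent: given $1\le r\le q\le\infty$, define $m\in[1,\infty]$ through the Young relation $\tfrac1m=1+\tfrac1q-\tfrac1r$. Since $0\le\tfrac1q\le\tfrac1r\le1$, the right-hand side lies in $[0,1]$, so $m$ is a legitimate exponent (degenerating to $m=\infty$ when $r=1,\,q=\infty$ and to $m=1$ when $r=q$). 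Young's inequality then yields
$$\|S_\beta(t)\ast v\|_q\le\|S_\beta(t)\|_m\,\|v\|_r,$$
and the remaining task is to compute $\|S_\beta(t)\|_m$ and verify its finiteness.

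For this I would insert the self-similar representation $S_\beta(x,t)=t^{-n/\beta}S_\beta(xt^{-1/\beta},1)$ into $\int_{\mathbb{R}^n}S_\beta(x,t)^m\,dx$ and change variables $y=xt^{-1/\beta}$, which produces the scaling identity
$$\|S_\beta(t)\|_m=t^{-\frac{n}{\beta}\left(1-\frac1m\right)}\|S_\beta(1)\|_m=t^{-\frac{n}{\beta}\left(\frac1r-\frac1q\right)}\|S_\beta(1)\|_m,$$
where the last step uses $1-\tfrac1m=\tfrac1r-\tfrac1q$. The constant $\|S_\beta(1)\|_m$ is finite for every $m\in[1,\infty]$: by \eqref{P_1+} we have $S_\beta(1)\in L^1(\mathbb{R}^n)\cap L^\infty(\mathbb{R}^n)$, hence $S_\beta(1)\in L^m(\mathbb{R}^n)$ by interpolation. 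Taking $C=\|S_\beta(1)\|_m$ then gives \eqref{semigroup1}.

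Finally, \eqref{semigroup2} is the case $r=q$, for which $m=1$ and the time exponent vanishes. Here Young's inequality reads $\|S_\beta(t)\ast v\|_r\le\|S_\beta(t)\|_1\|v\|_r$, and $\|S_\beta(t)\|_1=\int_{\mathbb{R}^n}S_\beta(x,t)\,dx=1$ by the nonnegativity and mass normalization in \eqref{P_1+}, so the contraction holds with the sharp constant $1$. I do not anticipate a genuine obstacle: the only points requiring attention are the admissibility of $m$ at the endpoints and the bookkeeping in the change of variables, both routine once the $L^1\cap L^\infty$ membership of $S_\beta(1)$ is invoked to control all the required $L^m$ norms.
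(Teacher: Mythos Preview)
Your proposal is correct and follows exactly the approach the paper indicates: the paper does not give a detailed proof but simply states, just before the lemma, that the estimates follow from Young's convolution inequality combined with the self-similar form $S_\beta(x,t)=t^{-n/\beta}S_\beta(xt^{-1/\beta},1)$ and the properties \eqref{P_1+}. Your write-up supplies precisely those details.
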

The following Hardy-Littlewood-Sobolev is a particular case of \cite[Theorem~4.3]{LiebLoss}.
\begin{lemma}[Hardy-Littlewood-Sobolev]\label{Hardy}${}$\\
	Let $1< p< r<\infty$ and $0<\alpha<n$ with $1/p+\alpha/n=1+1/r$. Then there exists a positive constant $C=C(n,\alpha,p)>0$ such that for every $f\in L^p(\mathbb{R}^{n})$, the following inequality holds
	\begin{equation}\label{Hardy-Littlewood}
		\||x|^{-\alpha}\ast f\|_r\leq C \|f\|_p.
	\end{equation}
\end{lemma}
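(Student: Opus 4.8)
The plan is to prove the mapping property $\||x|^{-\alpha}*f\|_r\le C\|f\|_p$ by establishing a pointwise ``Hedberg-type'' bound that controls the Riesz potential by the Hardy--Littlewood maximal function $Mf(x)=\sup_{\rho>0}|B(x,\rho)|^{-1}\int_{B(x,\rho)}|f|\,dy$, and then invoking the boundedness of $M$ on $L^p$. I would note first that the hypothesis $1/p+\alpha/n=1+1/r$ is exactly the dimensional balance forced by scaling: replacing $f$ by $f(\lambda\cdot)$ multiplies the left-hand side by $\lambda^{\alpha-n-n/r}$ and the right-hand side by $\lambda^{-n/p}$, so the relation is necessary for any such inequality to hold. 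Under this relation one checks that $p<r$ (since $1/p-1/r=1-\alpha/n>0$) and, writing $p'$ for the conjugate exponent, that $1/p'=\alpha/n-1/r$, whence $\alpha p'>n$ precisely because $r<\infty$; these two facts are what make the two pieces of the forthcoming splitting converge.

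The core step is the pointwise estimate. Fixing $x$ and a radius $R>0$, I would split
\[
(|x|^{-\alpha}*|f|)(x)=\int_{|x-y|<R}|x-y|^{-\alpha}|f(y)|\,dy+\int_{|x-y|\ge R}|x-y|^{-\alpha}|f(y)|\,dy=:A_R(x)+B_R(x).
\]
For the near part $A_R$, I decompose the ball into dyadic annuli $2^{-k-1}R\le|x-y|<2^{-k}R$; on each annulus I bound $|x-y|^{-\alpha}$ by $(2^{-k-1}R)^{-\alpha}$ and the mass of $|f|$ by $|B(x,2^{-k}R)|\,Mf(x)$, then sum the resulting geometric series (convergent since $\alpha<n$) to obtain $A_R(x)\le C R^{n-\alpha}Mf(x)$. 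For the far part $B_R$, I apply H\"older with exponents $p,p'$ and compute $\int_{|z|\ge R}|z|^{-\alpha p'}\,dz=C R^{n-\alpha p'}$ (finite because $\alpha p'>n$), which gives $B_R(x)\le C\|f\|_p R^{-n/r}$ after using the identity $n/p'-\alpha=-n/r$. Optimizing the bound $A_R(x)+B_R(x)\le C_1R^{n-\alpha}Mf(x)+C_2\|f\|_pR^{-n/r}$ over $R$---the two terms balance at $R\sim(\|f\|_p/Mf(x))^{p/n}$, the exponent $n-\alpha+n/r=n/p$ being once more the scaling relation---yields the Hedberg inequality
\[
(|x|^{-\alpha}*|f|)(x)\le C\,\|f\|_p^{1-p/r}\,\bigl(Mf(x)\bigr)^{p/r}.
\]

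To finish, I would raise this to the $r$-th power, integrate in $x$, and use $r-p=r(1-p/r)$ to factor out $\|f\|_p^{r-p}$, leaving $\||x|^{-\alpha}*f\|_r^r\le C\|f\|_p^{r-p}\int_{\mathbb{R}^n}(Mf)^p\,dx$. The Hardy--Littlewood maximal theorem, valid because $p>1$, gives $\|Mf\|_p\le C\|f\|_p$, and taking $r$-th roots produces the claimed inequality. The main obstacle---and the only genuinely nontrivial external input---is this strong $L^p$ bound for the maximal operator, which is exactly why the hypothesis $p>1$ (ruling out the failing endpoint $p=1$) is indispensable; the remaining hypotheses enter only through the convergence of the near dyadic sum ($\alpha<n$) and of the far integral ($r<\infty$, i.e.\ $\alpha p'>n$). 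This route yields the inequality with a non-sharp constant $C=C(n,\alpha,p)$, which is all that is needed here; the sharp constant of \cite[Theorem~4.3]{LiebLoss} instead requires the Riesz rearrangement inequality.
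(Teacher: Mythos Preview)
Your proof is correct: the Hedberg pointwise estimate you derive is valid, the dyadic near-part sum converges because $\alpha<n$, the far-part H\"older integral converges because $r<\infty$ forces $\alpha p'>n$, the optimization exponent $n-\alpha+n/r=n/p$ is exactly the scaling relation, and the maximal theorem (requiring $p>1$) closes the argument.

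The paper itself does not prove this lemma at all; it merely records it as a special case of \cite[Theorem~4.3]{LiebLoss}. So your proposal supplies something the paper does not. The Lieb--Loss proof you cite at the end proceeds via the Riesz rearrangement inequality and a careful analysis of extremizers, yielding the sharp constant; your Hedberg/maximal-function route is more elementary, gives a worse constant, but is entirely self-contained modulo the Hardy--Littlewood maximal theorem. Since the paper only needs \emph{some} constant $C=C(n,\alpha,p)$ for its fixed-point and bootstrap arguments in Theorems~\ref{localexistenceq} and~\ref{globalq}, your approach is perfectly adequate and arguably more transparent for this purpose.
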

\begin{definition}[\cite{Kwanicki,Silvestre}]\label{def4}
\fontshape{n}
\selectfont
Let $s \in (0,1)$. Let $X$ be a suitable set of functions defined on $\mathbb{R}^n$. Then, the fractional Laplacian $(-\Delta)^s$ in $\mathbb{R}^n$ is a non-local operator given by
\begin{equation}\label{fractional}
(-\Delta)^s: \,\,v \in X  \to (-\Delta)^s v(x):= C_{n,s}\,\, p.v.\int_{\mathbb{R}^n}\frac{v(x)- v(y)}{|x-y|^{n+2s}}dy, 
\end{equation}
as long as the right-hand exists, where $p.v.$ stands for Cauchy's principal value and $$C_{n,s}:= \frac{4^s \Gamma(\frac{n}{2}+s)}{\pi^{\frac{n}{2}}\Gamma(-s)}$$ is a normalization constant.
\end{definition}

\begin{lemma}\label{lemma4}\cite[Lemma~2.4]{DaoReissig}
Let $s \in (0,1]$, and $\varphi$ be a smooth function satisfying $\partial_x^2\varphi\in L^\infty(\mathbb{R}^n)$. For any $R>0$, let $\varphi_R$ be a function defined by
$$\varphi_R(x):= \varphi(x/R) \quad \text{ for all } x \in \mathbb{R}^n.$$
Then, $(-\Delta)^s \varphi_R$ satisfies the following scaling properties:
$$(-\Delta)^s \varphi_R(x)= R^{-2s}((-\Delta)^s\varphi)(x/R) \quad \text{ for all } x \in \mathbb{R}^n. $$
\end{lemma}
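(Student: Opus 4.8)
The plan is to obtain the identity directly from the singular-integral representation \eqref{fractional} by a single linear change of variables, treating the case $s\in(0,1)$ through the nonlocal formula and the endpoint $s=1$ through the classical chain rule.

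First I would check that, under the stated hypotheses, the principal-value integral defining $(-\Delta)^s\varphi$ (and likewise $(-\Delta)^s\varphi_R$) makes sense. Near the diagonal $y=x$, a second-order Taylor expansion combined with $\partial_x^2\varphi\in L^\infty(\mathbb{R}^n)$ gives $|\varphi(x)-\varphi(y)-\nabla\varphi(x)\cdot(x-y)|\leq\tfrac12\|\partial_x^2\varphi\|_\infty|x-y|^2$; the linear term is odd and cancels in the symmetric principal value, while the quadratic remainder is integrable against $|x-y|^{-(n+2s)}$ over a bounded ball. Away from the diagonal the integrand is dominated by a constant multiple of $|x-y|^{-(n+2s)}$ (in the applications $\varphi$ will be a smooth compactly supported cutoff, so convergence at infinity is automatic). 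Hence all the integrals written below converge absolutely once the singular part is read in the p.v.\ sense.

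Next, for $s\in(0,1)$ I would insert $\varphi_R(x)=\varphi(x/R)$ into \eqref{fractional} and substitute $y=Rz$, so that $dy=R^n\,dz$ and $|x-y|=R\,|x/R-z|$:
\[
(-\Delta)^s\varphi_R(x)=C_{n,s}\,\mathrm{p.v.}\!\int_{\mathbb{R}^n}\frac{\varphi(x/R)-\varphi(y/R)}{|x-y|^{n+2s}}\,dy
=C_{n,s}\,R^{-2s}\,\mathrm{p.v.}\!\int_{\mathbb{R}^n}\frac{\varphi(x/R)-\varphi(z)}{|x/R-z|^{n+2s}}\,dz,
\]
because the Jacobian factor $R^n$ and the factor $R^{-(n+2s)}$ from the denominator combine to $R^{-2s}$. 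The last integral is precisely $C_{n,s}^{-1}\big((-\Delta)^s\varphi\big)(x/R)$, which gives the claimed scaling. The only point worth a word — and the closest thing to an obstacle in an otherwise routine computation — is that the dilation $y\mapsto Rz$ maps the symmetric excised regions $\{|y-x|>\varepsilon\}$ onto $\{|z-x/R|>\varepsilon/R\}$, so the symmetric principal value is preserved and re-centered at $x/R$, as it should be. Finally, for $s=1$ the operator is the classical $-\Delta$, and two applications of the chain rule give $\Delta\big(\varphi(\cdot/R)\big)(x)=R^{-2}(\Delta\varphi)(x/R)$, i.e.\ the same scaling with exponent $2s=2$, consistent with letting $s\uparrow1$ in the fractional formula. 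Since the statement coincides with \cite[Lemma~2.4]{DaoReissig}, one could alternatively simply quote it.
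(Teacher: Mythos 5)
Your proof is correct: the paper does not prove this lemma at all but simply quotes it from \cite[Lemma~2.4]{DaoReissig}, and your change-of-variables argument (substituting $y=Rz$ in the principal-value integral, tracking the Jacobian $R^n$ against the factor $R^{-(n+2s)}$ from the kernel, and checking that the symmetric excised balls rescale consistently) is the standard and complete way to establish it, with the $s=1$ endpoint handled correctly by the chain rule. Your preliminary remarks on convergence of the singular integral under the hypothesis $\partial_x^2\varphi\in L^\infty(\mathbb{R}^n)$, and the observation that in the paper's application $\varphi$ is a bounded cutoff so the integral also converges at infinity, are an appropriate supplement to the bare statement.
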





\section{Global nonexistence}

\begin{proof}[Proof of Theorem \ref{theo1}]
The proof is by contradiction. Assume that $u$ is a nonnegative global weak solution of \eqref{1}. Then $u$ satisfies the following weak formulation
\begin{eqnarray*}
&{}&\int_0^T\int_{\mathbb{R}^n}(\mathcal{K}\ast|u|^p)|u|^q\psi(t,x)\,dx\,dt+\int_{\mathbb{R}^n}u(0,x)\psi(0,x)\,dx\\
&{}&=\int_0^T\int_{\mathbb{R}^n}u\,(-\Delta)^{\frac{\beta}{2}}\psi(t,x)\,dx\,dt-\int_0^T\int_{\mathbb{R}^n}u\,\psi_t(t,x)\,dx\,dt
\end{eqnarray*}
 for all $T>0$ and all compactly supported $\psi\in C^{2,1}([0,T]\times\mathbb{R}^n)$ such that $\psi(T,\cdotp)=0$.
 Let $R$ and $T$ be large parameters in $(0,\infty)$. Let us choose 
$$\psi(t,x):= \varphi^\ell_R(x) \varphi^\ell_T(t),$$
where
$$
\varphi_R(t)= \Phi\left(\frac{|x|}{R}\right),\qquad \varphi_T(t)= \Phi\left(\frac{t}{T}\right),\qquad x\in\mathbb{R}^n,\,\, t>0,
$$
with $\ell=(2(p+q)-2)/(p+q-2)$, and $\Phi\in \mathcal{C}^\infty(\mathbb{R})$ is a smooth non-increasing function 
satisfying $\mathbbm{1}_{(-\infty,\frac{1}{2}]} \leq \Phi\leq \mathbbm{1}_{(-\infty,1]}$. Then,
\begin{eqnarray}\label{3q}
\int_0^T\int_\mathcal{B}(\mathcal{K}\ast|u|^p)|u|^q\psi(t,x)\,dx\,dt+\int_{\mathcal{B}}u_0(x)\varphi^{\ell}_R(x)\,dx&=&\int_0^T\int_{\mathcal{B}}u\,\varphi^\ell_T(t)(-\Delta)^{\frac{\beta}{2}}\varphi^{\ell}_R(x)\,dx\,dt\nonumber\\
&{}&\quad -\int_{\frac T2}^T\int_{\mathcal{B}}u\,\varphi^{\ell}_R(x)\partial_t(\varphi^{\ell}_T(t))\,dx\,dt\nonumber\\
&=:&I_1+I_2,
\end{eqnarray}
where $\mathcal{B}=\left\{x\in\mathbb{R}^n;\,\,|x|\leq R\right\}$. Let us first derive an estimate for $I_1$. Using  H\"older's inequality together with  Ju's inequality (see e.g. \cite[Appendix]{Fino5}) $$(-\Delta)^{\beta/2}\left(\varphi_{R}^\ell\right)\leq \ell\varphi_{R}^{\ell-1}(-\Delta)^{\beta/2}\varphi_{R},$$ we have
\begin{eqnarray}\label{4q}
I_1&=&\int_0^T\int_{\mathcal{B}}u\,\varphi^\ell_T(t)(-\Delta)^{\frac{\beta}{2}}\varphi^{\ell}_R(x)\,dx\,dt\nonumber\\
&\leq&\ell \int_0^T\int_{\mathcal{B}}u\,\psi^{\frac{2}{p+q}}(t,x)\psi^{-\frac{2}{p+q}}(t,x)\varphi^\ell_T(t)\varphi^{\ell-1}_R(x)\left|(-\Delta)^{\frac{\beta}{2}}\varphi_R(x)\right|\,dx\,dt\nonumber\\
&\leq&\ell\left(\int_0^T\int_{\mathcal{B}}u^{\frac{p+q}{2}}\psi dx dt\right)^{\frac{2}{p+q}}\left(\int_0^T\int_{\mathcal{B}}\varphi^{\ell}_T(t)\varphi_R(x)\,\left|(-\Delta)^{\frac{\beta}{2}}\varphi_R(x)\right|^{\frac{p+q}{p+q-2}}dxdt\right)^{\frac{p+q-2}{p+q}} .
\end{eqnarray}
Similarly, by $\partial_t\varphi^\ell_T(t)=\ell\varphi_T^{\ell-1}(t)\partial_t\varphi_T(t)$, we obtain
\begin{equation}\label{5q}
I_2\leq\ell \left(\int_{\frac T2}^T\int_{\mathcal{B}}u^{\frac{p+q}{2}}\psi\,dx\,dt\right)^{\frac{2}{p+q}}\left(\int_{\frac T2}^T\int_{\mathcal{B}}\varphi_T(t)\varphi_R^{\ell}(x)\,\left|\partial_t\varphi_T(t)\right|^{\frac{p+q}{p+q-2}}\,dx\,dt\right)^{\frac{p+q-2}{p+q}}.
\end{equation}
Inserting \eqref{4q}-\eqref{5q} into \eqref{3q}, we arrive at
\begin{eqnarray}\label{6q}
&{}&\int_0^T\int_{\mathcal{B}}(\mathcal{K}\ast|u|^p)|u|^q\psi(t,x)\,dx\,dt+\int_{\mathcal{B}}u_0(x)\varphi^{\ell}_R(x)\,dx\nonumber\\
&{}&\leq J_1\,\left(\int_0^T\int_{\mathcal{B}}u^{\frac{p+q}{2}}\psi(t,x)\,dx\,dt\right)^{\frac{2}{p+q}}+\,J_2\,\left(\int_{\frac T2}^T\int_{\mathcal{B}}u^{\frac{p+q}{2}}\psi(t,x)\,dx\,dt\right)^{\frac{2}{p+q}},
\end{eqnarray}
where
$$J_1:=\ell\left(\int_0^T\int_{\mathcal{B}}\varphi^{\ell}_T(t)\varphi _R(x)\,\left|(-\Delta)^{\frac{\beta}{2}}\varphi_R(x)\right|^{\frac{p+q}{p+q-2}}\,dx\,dt\right)^{\frac{p+q-2}{p+q}},$$
and
$$J_2:=\ell\left(\int_{\frac T2}^T\int_{\mathcal{B}}\varphi _T(t)\varphi^{\ell}_R(x)\,\left|\partial_t\varphi_T(t)\right|^{\frac{p+q}{p+q-2}}\,dx\,dt\right)^{\frac{p+q-2}{p+q}} .$$
Let us estimate $J_2$. We have
\begin{eqnarray}\label{7q}
J_2&\leq& \ell\left(\int_{\mathcal{B}}\varphi^{\ell}_R(x)\,dx\right)^{\frac{p+q-2}{p+q}}\left(\int_0^T\Phi\left(\frac{t}{T}\right)\,\left|\partial_t\Phi\left(\frac{t}{T}\right)\right|^{\frac{p+q}{p+q-2}}\,dt\right)^{\frac{p+q-2}{p+q}}\notag\\
&\leq&C\,R^{\frac{n(p+q-2)}{p+q}}T^{-\frac{2}{p+q}}\left(\int_0^1\Phi(\tilde{t})\,\left|\Phi^{\prime}(\tilde{t})\right|^{\frac{p+q}{p+q-2}}\,d\widetilde{t}\right)^{\frac{p+q-2}{p+q}}\notag\\
&\leq& C\,R^{\frac{n(p+q-2)}{p+q}}T^{-\frac{2}{p+q}},
\end{eqnarray}
where we have used the change of variables
$$\widetilde{x}=\frac{x}{R},\qquad \widetilde{t}=\frac{t}{T}.$$
Similarly, by using Lemma \ref{lemma4}, we have
\begin{equation}\label{8q}\begin{split}
J_1&= \ell\left(\int_0^T\varphi^\ell_T(t)\,dt\right)^{\frac{p+q-2}{p+q}}\left(\int_{\mathcal{B}}\varphi_R(x)\,\left|(-\Delta)^{\frac \beta 2}\varphi_R(x)\right|^{\frac{p+q}{p+q-2}}\,dx\right)^{\frac{p+q-2}{p+q}}\\&\leq C\,T^{\frac{p+q-2}{p+q}}R^{\frac{n(p+q-2)}{p+q}-\beta}.\end{split}
\end{equation}
By \eqref{7q}-\eqref{8q}, we get from \eqref{6q} that
\begin{equation}\label{9q}\begin{split}
\int_0^T\int_{\mathcal{B}}(\mathcal{K}\ast|u|^p)|u|^q\psi(t,x)\,dx\,dt+\int_{\mathcal{B}}u_0(x)\varphi^{\ell}_R(x)\,dx& \leq C\,T^{\frac{p+q-2}{p+q}}R^{\frac{n(p+q-2)}{p+q}-\beta}\textbf{I}^{\frac{2}{p+q}}\\&+C\, R^{\frac{n(p+q-2)}{p+q}}T^{-\frac{2}{p+q}}\textbf{J}^{\frac{2}{p+q}},\end{split}
\end{equation}
where
\[
\textbf{I}:= \int_0^T\int_{\mathcal{B}}u^{\frac{p+q}{2}}\psi(t,x)\,dx\,dt\qquad\hbox{and}\qquad\textbf{J}:= \int_{\frac T2}^T\int_{\mathcal{B}}u^{\frac{p+q}{2}}\psi(t,x)\,dx\,dt.
\]
To estimate the left-hand side of \eqref{9q}, we have
\begin{align*}(\mathcal{K}\ast|u|^p)(x)&=\int_{\mathbb{R}^n}\mathcal{K}(|x-y|)|u(y)|^p\,dy\\&\geq \int_{\mathcal{B}}\mathcal{K}(|x-y|)|u(y)|^p\,dy.\end{align*}
Note that $|x|\leq R$ and $|y| \leq R$ on $\mathcal{B}$, then
$$|x-y|\leq 2 R,\quad\hbox{for all}\,\,x,\,y\in\mathcal{B},$$
which implies that
$$\mathcal{K}(|x-y|)\geq \mathcal{K}(2R),\quad\hbox{on}\,\,\mathcal{B},$$
for all $R\gg1$, namely $2R>R_0$. So,
$$(\mathcal{K}\ast|u|^p)(x)\geq  \mathcal{K}(2R)\int_{\mathcal{B}}|u(y)|^p\,dy,\quad\hbox{for all}\,\,x\in\mathcal{B}.$$
Then
\begin{eqnarray}\label{11q}
\int_{\mathcal{B}}(\mathcal{K}\ast|u|^p)|u(x)|^q\psi(t,x)\,dx&\geq&  \mathcal{K}(2R)\int_{\mathcal{B}}\int_{\mathcal{B}}|u(y)|^p|u(x)|^q\psi(t,x)\,dy\,dx\nonumber\\
&\geq&\mathcal{K}(2R)\int_{\mathbb{R}^n}\int_{\mathbb{R}^n}|u(y)|^p\psi(t,y)|u(x)|^q\psi(t,x)\,dy\,dx,
\end{eqnarray}
where we have used that $\psi\leq 1$ and $\psi(\cdotp,t)\equiv 0$ outside of $\mathcal{B}$ for all $t\geq0$.
On the other hand, using Cauchy-Schwarz' inequality, we have
\begin{eqnarray*}
&{}&\int_{\mathbb{R}^n}\int_{\mathbb{R}^n}|u(y)|^{\frac{p+q}{2}}\psi(t,y)|u(x)|^{\frac{p+q}{2}}\psi(t,x)\,dy\,dx\nonumber\\
&{}&=\int_{\mathbb{R}^n}\int_{\mathbb{R}^n}|u(y)|^{\frac{p}{2}}\psi^{\frac 12}(t,y)|u(x)|^{\frac{q}{2}}\psi^{\frac 12}(t,x)|u(y)|^{\frac{q}{2}}\psi^{\frac 12}(t,y)|u(x)|^{\frac{p}{2}}\psi^{\frac 12}(t,x)\,dy\,dx\nonumber\\
&{}&\leq\left(\int_{\mathbb{R}^n}\int_{\mathbb{R}^n}|u(y)|^p\psi(t,y)|u(x)|^q\psi(t,x)\,dy\,dx\right)^{\frac 12}\left(\int_{\mathbb{R}^n}\int_{\mathbb{R}^n}|u(y)|^q\psi(t,y)|u(x)|^p\psi(t,x)\,dy\,dx\right)^{\frac 12}\nonumber\\
&{}&=\int_{\mathbb{R}^n}\int_{\mathbb{R}^n}|u(y)|^p\psi(t,y)|u(x)|^q\psi(t,x)\,dy\,dx,
\end{eqnarray*} that is
\begin{equation}\label{12q}\begin{split}
&\int_{\mathbb{R}^n}\int_{\mathbb{R}^n}|u(y)|^{\frac{p+q}{2}}\psi(t,y)|u(x)|^{\frac{p+q}{2}}\psi(t,x)\,dy\,dx\\\leq & \int_{\mathbb{R}^n}\int_{\mathbb{R}^n}|u(y)|^p\psi(t,y)|u(x)|^q\psi(t,x)\,dy\,dx.\end{split}    
\end{equation}
Combining \eqref{11q}, and \eqref{12q}, and letting
\begin{align*}K(t):&=\int_{\mathcal{B}}(u(t,x))^{\frac{p+q}{2}}\psi(t,x)\,dx\\&=\int_{\mathbb{R}^n}(u(t,x))^{\frac{p+q}{2}}\psi(t,x)\,dx,\quad\hbox{for almost all}\,\,t\geq0,\end{align*}
we infer that
\begin{eqnarray*}
\int_{\mathcal{B}}(\mathcal{K}\ast|u|^p)|u(x)|^q\psi(t,x)\,dx&\geq& \mathcal{K}(2R)\int_{\mathbb{R}^n}\int_{\mathbb{R}^n}|u(y)|^{\frac{p+q}{2}}\psi(t,y)|u(x)|^{\frac{p+q}{2}}\psi(t,x)\,dy\,dx\nonumber\\
&{}&= \mathcal{K}(2R)\left(\int_{\mathbb{R}^n}|u(x)|^{\frac{p+q}{2}}\psi(t,x)\,dx\right)^2\nonumber\\
&{}&=\mathcal{K}(2R)\,K^2(t).
\end{eqnarray*}
Hence
\begin{equation}\label{120q}
\int_0^T\int_{\mathcal{B}}(\mathcal{K}\ast |u|^p)|u|^q \psi(t,x)\,dx\,dt \geq \mathcal{K}(2R)\int_0^TK^2(t)\,dt.
\end{equation}
On the other hand, by the Cauchy-Schwarz inequality, we have
\begin{equation}\label{130q}\begin{split}
 \textbf{I}^{\frac{2}{p+q}}&=\left(\int_0^T\int_{\mathcal{B}}(u(t,x))^{\frac{p+q}{2}}\psi(t,x)\,dx\,dt\right)^{\frac{2}{p+q}}\\&\leq T^{\frac{1}{p+q}}\left(\int_0^T K^2(t)\,dt\right)^{\frac{1}{p+q}}.\end{split}
\end{equation}
Similarly, 
\begin{equation*}
\textbf{J}^{\frac{2}{p+q}}\leq T^{\frac{1}{p+q}}\left(\int_{\frac T2}^T K^2(t)\,dt\right)^{\frac{1}{p+q}}.
\end{equation*}
By \eqref{9q}, \eqref{120q}, and \eqref{130q}, we get
\begin{equation}\label{10q}\begin{split}\mathcal{K}(2R)\int_0^TK^2(t)\,dt+2\int_{\mathcal{B}}u_0(x)\varphi^{\ell}_R(x)\,dx
 &\leq C\,T^{\frac{p+q-1}{p+q}}R^{\frac{n(p+q-2)}{p+q}-\beta}\left(\int_{0}^T K^2(t)\,dt\right)^{\frac{1}{p+q}}\\&+C\, R^{\frac{n(p+q-2)}{p+q}}T^{-\frac{1}{p+q}}\left(\int_{\frac T2}^T K^2(t)\,dt\right)^{\frac{1}{p+q}}.\end{split}
\end{equation}
By choosing $R=T^{\frac 1\beta}$, it follows from \eqref{10q} that
$$
 \mathcal{K}(2T^{\frac 1\beta})\int_0^TK^2(t)\,dt+\int_{\mathcal{B}}u_0(x)\varphi^{\ell}_R(x)\,dx\leq C\,T^{\frac{n(p+q-2)-\beta}{\beta(p+q)}}\left(\int_{0}^T K^2(t)\,dt\right)^{\frac{1}{p+q}},
$$
that is,
$$
\int_0^TK^2(t)\,dt+\frac{1}{ \mathcal{K}(2T^{\frac 1\beta})}\int_{\mathcal{B}}u_0(x)\varphi^{\ell}_R(x)\,dx\leq C\frac{T^{\frac{n(p+q-2)-\beta}{\beta(p+q)}}}{ \mathcal{K}(2T^{\frac 1\beta})}\left(\int_{0}^T K^2(t)\,dt\right)^{\frac{1}{p+q}}.
$$
By Young's inequality $ab\leq \frac{1}{2}a^{p+q}+Cb^{\frac{p+q}{p+q-1}}$, we arrive at
$$
\frac{1}{ \mathcal{K}(2T^{\frac 1\beta})}\int_{\mathcal{B}}u_0(x)\varphi^{\ell}_R(x)\,dx\leq C\frac{T^{\frac{n(p+q-2)-\beta}{\beta(p+q-1)}}}{ (\mathcal{K}(2T^{\frac 1\beta}))^{\frac{p+q}{p+q-1}}},
$$
which implies that
\begin{equation}\label{14q}
\int_{\mathcal{B}}u_0(x)\varphi^{\ell}_R(x)\,dx\leq \frac{C}{ \left(\mathcal{K}(2T^{\frac 1\beta})T^{\frac{-n(p+q-2)+\beta}{\beta }}\right)^{\frac{1}{p+q-1}}}.
\end{equation}
As
$$\limsup_{R\rightarrow\infty}\left(\mathcal{K}(R)\,R^{-n(p+q-2)+\beta}\right)=\infty,$$
then
\begin{align*}\liminf_{R\rightarrow\infty}\int_{\mathcal{B}}u_0(x)\varphi^{\ell}_R(x)\,dx&\lesssim  \left(\liminf_{R\rightarrow\infty}\left(\mathcal{K}(R)^{-1}R^{n(p+q-2)-\beta}\right)\right)^{\frac{1}{p+q-1}}\\&=0.\end{align*}
Using $u_0\in L^1(\mathbb{R}^n)$ and the Lebesgue convergence theorem, we arrive at
$$0<\int_{\mathbb{R}^n}u_0(x)\,dx\leq 0;$$
contradiction.\\

\noindent (ii) In this case, as $R>1$, we have the following estimate
\begin{align*} \int_{\mathcal{B}}u_0(x)\varphi^\ell_{R}(x)\,dx&\geq \int_{\overline{\mathcal{B}}}u_0(x)\,dx\\&\geq\varepsilon \int_{\overline{\mathcal{B}}}(1+|x|^2)^{-\gamma/2}\,dx\\&\geq \varepsilon\,C \int_{\overline{\mathcal{B}}}\left(R^{2}+ R^{2}\right)^{-\gamma/2}\,dx\\&=\varepsilon\,C R^{n-\gamma},\end{align*}
where $\overline{\mathcal{B}}:=\left\{x\in\mathbb{R}^n;\,\, |x|\leq {R}/{2}\right\}$. Therefore, by repeating the same calculation as in case (i) (see \eqref{14q}) and recalling that $T^{\frac 1\beta}=R$ , we get
$$
\varepsilon\,C R^{n-\gamma} \leq C\left(\mathcal{K}(2R)^{-1}R^{n(p+q-2)-\beta}\right)^{\frac{1}{p+q-1}},
$$
that is,
$$
\varepsilon\lesssim \left(\mathcal{K}(2R)^{-1}R^{\gamma(p+q-1)-n-\beta}\right)^{\frac{1}{p+q-1}},
$$
and so,
$$
0<\varepsilon\lesssim \left(\liminf_{R\rightarrow\infty}\left(\mathcal{K}(R)^{-1}R^{\gamma(p+q-1)-n-\beta}\right)\right)^{\frac{1}{p+q-1}}=0;
$$
contradiction. The proof is complete.
\end{proof}


\section{Local existence.}

\begin{proof}[Proof of Theorem \ref{localexistenceq}] The proof is divided into several steps.\\
	{\it Step 1. Fixed-point argument.} Let $T>0$ be fixed. We define the Banach space
$$E_T=L^\infty((0,T),L^s(\mathbb{R}^n)\cap L^\infty(\mathbb{R}^{n})).$$
The norm on $E_T$ is defined by
$$\|u\|_{E_T}=\sup_{t \in (0,T)}\|u(t)\|_{L^{s}\cap L^\infty}= \sup_{t \in (0,T)}\left(\|u(t)\|_{L^{s}}+\|u(t)\|_{L^{\infty}}\right).$$
We choose $R\geq  \|u_0\|_{L^s\cap L^\infty}$. In order to use the Banach fixed-point theorem, we introduce the following nonempty complete metric space
	\begin{equation}\label{norm}
		B_T(R)=\{u\in E_T:\,\,\|u\|_{E_T}\leq 2 R\},
	\end{equation}
	equipped with the distance $d(u,v)=\|u-v\|_E $.	For $u\in B_{T}(R)$, we define $\Lambda(u)$ by
	\begin{equation}\label{solop}
		\Lambda(u)(t):=S_{\beta}(t)u_0 + \int_{0}^{t}S_{\beta}(t-\tau )  I_\alpha(|u|^{p})|u|^{q-1}u(\tau ) \,\mathrm{d}\tau .
	\end{equation}
	Let us prove that  $\Lambda: B_{T}(R) \rightarrow B_{T}(R)$. Using \eqref{semigroup2}, we obtain for any $u \in B_{T}(R)$, 
	\begin{align*}
		\|\Lambda(u)(t)\|_{L^{s}} & \leq \|S_{\beta}(t)u_0\|_{L^{s}} + \left\| \int_{0}^{t}S_{\beta}(t-\tau)  I_\alpha(|u|^p)|u|^q(\tau)\,\mathrm{d}\tau \right\|_{L^{s}}  \\
				&\leq \|u_0\|_{L^s}+  \int_{0}^{t} \|I_\alpha (|u|^p)|u|^q(\tau)\|_{L^s} \, \mathrm{d}\tau\\
				&\leq \|u_0\|_{L^s}+\int_{0}^{t}  \|u(\tau)\|^q_{L^\infty} \|I_\alpha (|u|^p)(\tau)\|_{L^s} \, \mathrm{d}\tau,
				\end{align*}	
		for all $t\in (0,T)$. Applying Lemma \ref{Hardy} with the identity $1/q_1=\alpha/n+1/s$, and noting that the condition $n/(n-\alpha)<s$ guarantees $q_1>1$, we get
		\begin{align*}
		\|\Lambda(u)(t)\|_{L^{s}} &\leq \|u_0\|_{L^s}+   C\int_{0}^{t}\|u(\tau)\|^q_{L^\infty} \||u(\tau)|^p\|_{L^{q_1}} \, \mathrm{d}\tau.
				\end{align*}	
			Since $s<n(p-1)/\alpha$ implies $1/q_1<p/s$, then $p>s/q_1$ and therefore
		\begin{align}\label{fix1}
		\|\Lambda(u)(t)\|_{L^{s}} &\leq \|u_0\|_{L^s}+  C \int_{0}^{t} \|u(\tau)\|^q_{L^\infty}\|u(\tau)\|^{p-\frac{s}{q_1}}_{L^{\infty}} \||u(\tau)|^{\frac{s}{q_1}}\|_{L^{q_1}} \, \mathrm{d}\tau\notag\\
		&= \|u_0\|_{L^s}+   C\int_{0}^{t} \|u(\tau)\|^{p+q-\frac{s}{q_1}}_{L^{\infty}} \|u(\tau)\|^{\frac{s}{q_1}}_{L^{s}} \, \mathrm{d}\tau\notag\\
		&\leq \|u_0\|_{L^s} + C \|u\|^{p+q}_{E_T}  T\notag\\
		&\leq R+ C 2^{p+q}R^{p+q}  T\notag\\
		&\leq 2R,
				\end{align}
	for all $t\in (0,T)$, for a sufficiently small $T>0$ (depending on $R$). Moreover, by \eqref{semigroup1}-\eqref{semigroup2}, we have
	\begin{align*}
		\|\Lambda(u)(t)\|_{L^{\infty}} & \leq \|S_{\beta}(t)u_0\|_{L^{\infty}} + \left\| \int_{0}^{t}S_{\beta}(t-\tau)  I_\alpha(|u|^p)|u|^q(\tau) \,\mathrm{d}\tau \right\|_{L^{\infty}}  \\
				&\leq \|u_0\|_{L^\infty}+C   \int_{0}^{t} (t-\tau)^{-\frac{n}{\beta r}} \|I_\alpha (|u|^p)|u|^q(\tau)\|_{L^{r}} \, \mathrm{d}\tau\\
				&\leq \|u_0\|_{L^\infty}+C   \int_{0}^{t} \|u(\tau)\|^q_{L^\infty} (t-\tau)^{-\frac{n}{\beta r}} \|I_\alpha (|u|^p)(\tau)\|_{L^{r}} \, \mathrm{d}\tau,
				\end{align*}	
		for all $t\in (0,T)$.	Using Lemma \ref{Hardy} with $1/\tilde{q}=\alpha/n+1/r$, we get
		\begin{align*}
		\|\Lambda(u)(t)\|_{L^{\infty}} &\leq \|u_0\|_{L^\infty}+   C\int_{0}^{t} (t-\tau)^{-\frac{n}{\beta r}}\|u(\tau)\|^q_{L^\infty}\||u(\tau)|^p\|_{L^{\tilde{q}}} \, \mathrm{d}\tau.
				\end{align*}	
			By choosing $1/\tilde{q}<\min\{p/s,\,(\beta+\alpha)/n\}$, we infer that $n/(\beta r)<1$ and $p>s/\tilde{q}$. So,
		\begin{align}\label{fix2}
		\|\Lambda(u)(t)\|_{L^{\infty}} &\leq \|u_0\|_{L^\infty}+  C \int_{0}^{t}(t-\tau)^{-\frac{n}{\beta r}} \|u(\tau)\|^q_{L^\infty}\|u(\tau)\|^{p-\frac{s}{\tilde{q}}}_{L^{\infty}} \||u(\tau)|^{\frac{s}{\tilde{q}}}\|_{L^{\tilde{q}}} \, \mathrm{d}\tau\notag\\
		&= \|u_0\|_{L^\infty}+   C\int_{0}^{t}(t-\tau)^{-\frac{n}{\beta r}} \|u(\tau)\|^{p+q-\frac{s}{\tilde{q}}}_{L^{\infty}} \|u(\tau)\|^{\frac{s}{\tilde{q}}}_{L^{s}} \, \mathrm{d}\tau\notag\\
		&\leq \|u_0\|_{L^\infty} + C \|u\|^{p+q}_{E_T}  T^{1-\frac{n}{\beta r}}\notag\\
		&\leq R+ C 2^{p+q}R^{p+q}  T^{1-\frac{n}{\beta r}}\notag\\
		&\leq 2R,
				\end{align}
	for all $t\in (0,T)$, for a sufficiently small $T>0$ (depending on $R$). By combining \eqref{fix1}-\eqref{fix2}, we conclude that $\|\Lambda(u)\|_{E_T}\leq 2 R$, that is  $\Lambda(u) \in B_{T}(R)$.\\
	Similarly, one can see that $\Lambda$ is a contraction. For $u,v \in B_T(R)$, we have
\begin{eqnarray*}
		\|\Lambda(u)(t)-\Lambda(v)(t)\|_{L^{s}}&\leq& \int_{0}^{t} \|I_\alpha (|u|^{p})|u|^{q-1}u(\tau)-I_\alpha(|v|^{p})|v|^{q-1}v(\tau)\|_{L^s} \, \mathrm{d}\tau\\
		&\leq& \int_{0}^{t} \|I_\alpha (||u(\tau)|^{p}-|v(\tau)|^{p}|)|u|^q\|_{L^s} \, \mathrm{d}\tau\\
		&{}&\,+ \int_{0}^{t} \|I_\alpha (|v|^p)||u(\tau)|^{q-1}u(\tau)-|v(\tau)|^{q-1}v(\tau)|\|_{L^s} \, \mathrm{d}\tau\\
		&\leq& \int_{0}^{t} \|u(\tau)\|^q_{L^\infty}\|I_\alpha (||u(\tau)|^{p}-|v(\tau)|^{p}|)\|_{L^s} \, \mathrm{d}\tau\\
		&{}&\,+ \int_{0}^{t}\||u(\tau)|^{q-1}u(\tau)-|v(\tau)|^{q-1}v(\tau)\|_{L^\infty}  \|I_\alpha (|v|^p)\|_{L^s} \, \mathrm{d}\tau
		\end{eqnarray*}
for all $t\in (0,T)$.  Using Lemma \ref{Hardy} with $1/q_1=\alpha/n+1/s$, we get
		\begin{align*}
		\|\Lambda(u)(t)-\Lambda(v)(t)\|_{L^{s}} &\leq   C\int_{0}^{t} \|u\|^q_{L^\infty}\||u(\tau)|^{p}-|v(\tau)|^{p}\|_{L^{q_1}} \, \mathrm{d}\tau\\
		&\quad+C \int_{0}^{t}\||u(\tau)|^{q-1}u(\tau)-|v(\tau)|^{q-1}v(\tau)\|_{L^\infty}  \||v|^p\|_{L^{q_1}} \, \mathrm{d}\tau.				\end{align*}	
		As 
		$$||u(\tau)|^{p}-|v(\tau)|^{p}|\leq C(|u(\tau)|^{p-1}+|v(\tau)|^{p-1})|u(\tau)-v(\tau)|,$$
		$$||u(\tau)|^{q-1}u(\tau)-|v(\tau)|^{q-1}v(\tau)|\leq C(|u(\tau)|^{q-1}+|v(\tau)|^{q-1})|u(\tau)-v(\tau)|$$
		 and 
		 $$\frac{1}{q_1}=\frac{\alpha}{n}+\frac{1}{s},$$
			by applying H\"older's inequality, we obtain	
			$$\||u(\tau)|^{p}-|v(\tau)|^{p}\|_{L^{q_1}}\leq C(\||u(\tau)|^{p-1}\|_{L^{\frac{n}{\alpha}}}+\||v(\tau)|^{p-1}\|_{L^{\frac{n}{\alpha}}})\|u(\tau)-v(\tau)\|_{L^{s}},$$
			and
			\begin{align*}\||u(\tau)|^{q-1}u(\tau)-|v(\tau)|^{q-1}v(\tau)\|_{L^\infty}&\leq (\|u(\tau)\|^{q-1}_{L^\infty}+\|v(\tau)\|^{q-1}_{L^\infty})\|u(\tau)-v(\tau)\|_{L^{\infty}}\\&\leq 2^q R^{q-1}\|u-v\|_{E_T}.\end{align*}	
				Since $s<n(p-1)/\alpha$ implies $p-1>s\alpha/n$, we conclude that
				$$\||u(\tau)|^{p-1}\|_{L^{\frac{n}{\alpha}}}\leq \|u(\tau)\|^{p-1-\frac{s\alpha}{n}}_{L^{\infty}}\|u(\tau)\|^{\frac{s\alpha}{n}}_{L^s}\leq (2R)^{p-1},$$
			and
				$$\||v(\tau)|^{p-1}\|_{L^{\frac{n}{\alpha}}}\leq \|v(\tau)\|^{p-1-\frac{s\alpha}{n}}_{L^{\infty}}\|v(\tau)\|^{\frac{s\alpha}{n}}_{L^s}\leq (2R)^{p-1}.$$
			Therefore,
$$
			\||u(\tau)|^{p}-|v(\tau)|^{p}\|_{L^{q_1}}\leq  C 2^p\,R^{p-1}\|u(\tau)-v(\tau)\|_{L^{s}}.
		$$
		Moreover, 
		$$ \||v|^p\|_{L^{q_1}} \leq \|v(\tau)\|^{p-\frac{s}{q_1}}_{L^{\infty}} \|v(\tau)\|^{\frac{s}{q_1}}_{L^{s}} \leq (2R)^p.$$
			It follows that
						\begin{equation}\label{fix3}
		\|\Lambda(u)(t)-\Lambda(v)(t)\|_{L^{s}} \leq C 2^{p+q}R^{p+q-1} T\|u-v\|_{E_T}\leq \frac{1}{2}d(u,v),
				\end{equation}
	for all $t\in (0,T)$, for a sufficiently small $T>0$ (depending on $R$). In addition, 		\begin{align*}
		\|\Lambda(u)(t)-\Lambda(v)(t)\|_{L^{\infty}} & =\left\| \int_{0}^{t}S_{\beta}(t-\tau) \left(I_\alpha (|u|^{p})|u|^{q-1}u(\tau)-I_\alpha(|v|^{p})|v|^{q-1}v(\tau)\right)\,\mathrm{d}\tau \right\|_{L^{\infty}}  \\
		&\leq \int_{0}^{t}(t-\tau)^{-\frac{n}{\beta r}}  \|I_\alpha (||u(\tau)|^{p}-|v(\tau)|^{p}|)|u|^q\|_{L^r} \, \mathrm{d}\tau\\
		&\quad+ \int_{0}^{t} (t-\tau)^{-\frac{n}{\beta r}} \|I_\alpha (|v|^p)||u(\tau)|^{q-1}u(\tau)-|v(\tau)|^{q-1}v(\tau)|\|_{L^r} \, \mathrm{d}\tau\\
				&\leq C   \int_{0}^{t} (t-\tau)^{-\frac{n}{\beta r}} \|u(\tau)\|^q_{L^\infty} \|I_\alpha (||u|^{p}-|v|^{p}|)(\tau)\|_{L^{r}} \, \mathrm{d}\tau\\
				&\quad+ \int_{0}^{t} (t-\tau)^{-\frac{n}{\beta r}} \|I_\alpha (|v|^p)\|_{L^r} \||u(\tau)|^{q-1}u(\tau)-|v(\tau)|^{q-1}v(\tau)|\|_{L^\infty} \, \mathrm{d}\tau,
				\end{align*}	
		for all $t\in (0,T)$.	Using Lemma \ref{Hardy} with $1/\tilde{q}=\alpha/n+1/r$, we get
		\begin{align*}
		\|\Lambda(u)(t)-\Lambda(v)(t)\|_{L^{\infty}} &\leq  C\int_{0}^{t} (t-\tau)^{-\frac{n}{\beta r}}\|u(\tau)\|^q_{L^\infty} \||u(\tau)|^{p}-|v(\tau)|^{p}\|_{L^{\tilde{q}}} \, \mathrm{d}\tau\\
		&\quad+ \int_{0}^{t} (t-\tau)^{-\frac{n}{\beta r}} \||v|^p\|_{L^{\tilde{q}}} \||u(\tau)|^{q-1}u(\tau)-|v(\tau)|^{q-1}v(\tau)|\|_{L^\infty} \, \mathrm{d}\tau.
				\end{align*}	
			As 
		$$|u(\tau)|^{p}-|v(\tau)|^{p}\leq C(|u(\tau)|^{p-1}+|v(\tau)|^{p-1})|u(\tau)-v(\tau)|$$
		 and 
		 $$\frac{1}{\tilde{q}}=\frac{\alpha}{n}+\frac{1}{r},$$
			by applying H\"older's inequality, we obtain	
			$$\||u(\tau)|^{p-1}-|v(\tau)|^{p-1}\|_{L^{\tilde{q}}}\leq C(\||u(\tau)|^{p-1}\|_{L^{\frac{n}{\alpha}}}+\||v(\tau)|^{p-1}\|_{L^{\frac{n}{\alpha}}})\|u(\tau)-v(\tau)\|_{L^{r}}.$$	
				Since $s<n(p-1)/\alpha$ implies $p-1>s\alpha/n$, we conclude that
				$$\||u(\tau)|^{p-1}\|_{L^{\frac{n}{\alpha}}}\leq \|u(\tau)\|^{p-1-\frac{s\alpha}{n}}_{L^{\infty}}\|u(\tau)\|^{\frac{s\alpha}{n}}_{L^s}\leq (2R)^{p-1},$$
			and
				$$\||v(\tau)|^{p-1}\|_{L^{\frac{n}{\alpha}}}\leq \|v(\tau)\|^{p-1-\frac{s\alpha}{n}}_{L^{\infty}}\|v(\tau)\|^{\frac{s\alpha}{n}}_{L^s}\leq (2R)^{p-1}.$$
				Furthermore, by choosing $$1/\tilde{q}<\min\{p\alpha/n(p-1),\, (\beta+\alpha)/n\},$$ we infer that $n/(\beta r)<1$ and $r>n(p-1)/\alpha>s,$ so
				\begin{align*}\|u(\tau)-v(\tau)\|_{L^{r}}&\leq \|u(\tau)-v(\tau)\|^{1-\frac{s}{r}}_{L^\infty}\|u(\tau)-v(\tau)\|^{\frac{s}{r}}_{L^s}\\&\leq \|u-v\|_{E_T}.\end{align*}
			Noting that $$1/\tilde{q}<\min\{p\alpha/n(p-1),\, (\beta+\alpha)/n\}<\min\{p/s,\,(\beta+\alpha)/n\},$$ due to $s<n(p-1)/\alpha$. Therefore,
$$
			\||u(\tau)|^{p}-|v(\tau)|^{p}\|_{L^{\tilde{q}}}\leq C2^p\,R^{p-1}\|u-v\|_{E_T}.
		$$
		Moreover, 
			\begin{align*}\||u(\tau)|^{q-1}u(\tau)-|v(\tau)|^{q-1}v(\tau)\|_{L^\infty}&\leq (\|u(\tau)\|^{q-1}_{L^\infty}+\|v(\tau)\|^{q-1}_{L^\infty})\|u(\tau)-v(\tau)\|_{L^{\infty}}\\&\leq 2^q R^{q-1}\|u-v\|_{E_T},\end{align*}
			and
			$$\||v|^p\|_{L^{\tilde{q}}}\leq \|v(\tau)\|^{p-\frac{s}{\tilde{q}}}_{L^{\infty}} \||v(\tau)|^{\frac{s}{\tilde{q}}}\|_{L^{\tilde{q}}}\leq (2R)^p.$$	
		It follows that
		\begin{equation}\label{fix4}\begin{split}
		\|\Lambda(u)(t)-\Lambda(v)(t)\|_{L^{\infty}}& \leq C 2^{p+q}R^{p+q-1} T^{1-\frac{n}{\beta r}}\|u-v\|_{E_T}\\&\leq \frac{1}{2}d(u,v),\end{split}
				\end{equation}
	for all $t\in (0,T)$, for a sufficiently small $T>0$ (depending on $R$). By combining \eqref{fix3}-\eqref{fix4}, we conclude that 
$$
		d(\Lambda(u),\Lambda(v)) \leq \frac{1}{2}d(u,v).
$$
	Consequently, by the contraction principle, $\Lambda$  has a unique fixed point $u$ in $B_T(R)$.\\
	
	\noindent {\it Step 2. Uniqueness in $E_T$.} We are going  to extend this uniqueness to $E_T$. Let $u,v \in E_T$.  Then, for sufficiently large $\tilde{R}>0$ and small $\tilde{T}>0$, we have $u,v \in B_{\tilde{T}}(\tilde{R})$. As a result, $u(t)= v(t)$ for small $t>0$, which, by a standard continuation argument, extends to the entire interval $(0,T)$. For more details, see e.g. \cite{FinoViana}.\\
	
	\noindent {\it Step 3. Regularity.} One can easily check that $f\in L^1((0,T),L^s(\mathbb{R}^{n}))$, with
	$$f(t):=I_\alpha(|u|^{p})|u|^{q-1}u(t),\qquad \hbox{for all}\,\,t\in (0,T).$$
	Applying \cite[Lemma~4.1.5]{CH}, using the continuity of the semigroup $S_{\beta}(t)$,  we conclude that $u\in C([0,T],L^s(\mathbb{R}^{n}))$.\\
	
	\noindent {\it Step 4. Maximal solution and blow-up alternative.} Using the uniqueness of the mild solution, we conclude the existence
	of a solution on a maximal interval $[0,T_{\max}),$ where
	$$
	T_{\max}:=\sup\left\{T>0\;;\;\text{$\exists$ a mild solution $u\in C([0,T],L^s(\mathbb{R}^{n}))\cap L^\infty((0,T),L^\infty(\mathbb{R}^{n}))$
		to \eqref{2}}\right\}.$$
	Obviously, $T_{\max}\leq \infty$. To prove that $\|u(t)\|_{L^s(\mathbb{R}^{n})\cap L^\infty(\mathbb{R}^{n})}\rightarrow\infty$ as $t\rightarrow T_{\max},$
	whenever $T_{\max}<\infty,$ we proceed by contradiction. If
	$$\liminf_{t\rightarrow T_{\max}}\|u(t)\|_{L^s(\mathbb{R}^{n})\cap L^\infty(\mathbb{R}^{n})}=:L<\infty,$$
	then there exists a time sequence $\{t_m\}_{m\geq0}$ tending to $T_{\max}$ as $m\rightarrow\infty$ and such that
	$$\sup_{m\in\mathbb{N}}\|u(t_m)\|_{L^s\cap L^\infty}\leq L+1.$$
	Using again a fixed-point argument with $u(t_m)$ as initial condition, one can deduce that there exists $T(L + 1) > 0$, depends on $L+1$, such that the solution
	$u(t)$ can be extended on the interval $[t_m, t_m + T(L + 1)]$ for any $m\geq0$. Thus, by the definition of the maximality time, $T_{\max}\geq t_m+T(L+1)$, for any $m\geq0$. We get the desired contradiction by letting $m\rightarrow\infty$.\\
	
	\noindent {\it Step 5. Positivity of solutions.} Assume $u_0\geq0$.  In this case, we can construct a nonnegative solution on some interval $[0,T]$ by applying the fixed point argument within the positive cone $E_T^+=\{u\in E_T;\;u\geq0\}$, and using the positivity of $S_\beta(t)$ \eqref{P_1+}. By the uniqueness of solutions, it then follows that $u(t)\geq0$ for all $t\in(0,T_{\max}).$\\

\end{proof}

\section{Global existence}

\begin{proof}[Proof of Theorem \ref{globalq}] Since $$p+q>1+(\beta+\alpha)/(n-\alpha),$$ it is possible to choose a positive constant $q^*>0$ such that
\begin{equation}\label{estiA}
    \frac{\beta+\alpha}{\beta(p+q-1)}-\frac{1}{p+q}<\frac{n}{\beta
    q^*}< \frac{\beta+\alpha}{\beta(p+q-1)}\quad \text{with} \quad q^*>p+q.
\end{equation}
From this, it follows that 
\begin{equation}\label{estiB}
    q^*>\frac{n(p+q-1)}{\beta+\alpha}=q_{\mathrm{sc}}>1.
\end{equation}
We then define
\begin{equation}\label{estiC}\begin{split}
    \beta^*:&=\frac{n}{\beta q_{\mathrm{sc}}}-\frac{n}{\beta
    q^*}\\&=\frac{\beta+\alpha}{\beta(p+q-1)}-\frac{n}{\beta
    q^*}.\end{split}
\end{equation}
Therefore, based on relations (\ref{estiA})-(\ref{estiC}), the following relations hold
\begin{equation}\label{estiD}
    \beta^*>0,\qquad 1-\frac{n(p+q-1)}{\beta q^*}+\frac{\alpha}{\beta} -(p+q-1)\beta^*=0,\qquad\hbox{and}\qquad (p+q)\beta^*<1.
\end{equation}
 Since $u_0\in L^{q_{\mathrm{sc}}}(\mathbb{R}^{n})$, applying Lemma \ref{Lp-Lqestimate} with $q>q_{\mathrm{sc}}$, and using \eqref{estiC}, we
get
\begin{equation}\label{estiE}
    \sup_{t>0}t^{\beta^*}\|S_{\beta}(t)u_0\|_{L^{q^*}}\leq
   C \|u_0\|_{L^{q_{\mathrm{sc}}}}=:\rho<\infty.
\end{equation}
Set
\begin{equation}\label{estiF}
    \mathbb{X}:=\left\{u\in
    L^\infty((0,\infty),L^{q^*}(\mathbb{R}^{n}));\;\sup_{t>0}t^{\beta^*}\|u(t)\|_{L^{q^*}}\leq\delta\right\},
\end{equation}
where $\delta>0$ is chosen to be sufficiently small. For $u,v\in \mathbb{X}$, we define the metric
\begin{equation}\label{estiG}
    d_{\mathbb{X}}(u,v):=\sup_{t>0}t^{\beta^*}\|u(t)-v(t)\|_{L^{q^*}}.
\end{equation}
It is straightforward to verify that $(\mathbb{X},d)$ is a nonempty complete metric space. For $u\in \mathbb{X}$, we define the mapping $\Phi(u)$ by
\begin{equation}\label{estiH}
    \Phi(u)(t):=S_{\beta}(t)u_0 + \int_{0}^{t}S_{\beta}(t-\tau) I_\alpha(|u|^{p})|u|^{q-1}u(\tau) \,\mathrm{d}\tau,\quad \text{for all}\,\, t\geq0.
\end{equation}
Let us now verify that the operator $\Phi:  \mathbb{X} \rightarrow \mathbb{X}$. By employing inequalities \eqref{estiE} and \eqref{estiF}, along with Lemma \ref{Lp-Lqestimate}, we derive the following estimate for any $u \in  \mathbb{X}$, 
$$
    t^{\beta^*}\|\Phi(u)(t)\|_{L^{q^*}}\leq \rho+\,Ct^{\beta^*}\int_0^t(t-\tau)^{-\frac{n}{\beta}\left(\frac{1}{r_1}-\frac{1}{q^*}\right)}\|I_\alpha(|u|^p)(\tau)|u|^q(\tau)\|_{L^{r_1}}\,d\tau,
   $$
   for any $1<r_1<q^*$. Using the assumption $q^*>q$, one may apply H\"older's inequality with $\frac{1}{q_1}+\frac{q}{q^*}=\frac{1}{r_1}$, then
   $$
    t^{\beta^*}\|\Phi(u)(t)\|_{L^{q^*}}\leq \rho+\,Ct^{\beta^*}\int_0^t(t-\tau)^{-\frac{n}{\beta}\left(\frac{1}{r_1}-\frac{1}{q^*}\right)}\|I_\alpha(|u|^p)(\tau)\|_{L^{q_1}}\|u(\tau)\|^q_{L^{q^*}}\,d\tau.
   $$
    By employing the assumption $q^*>p$, and using Lemma \ref{Hardy} with $p/q^*=\alpha/n+1/q_1$, we get
\begin{eqnarray}\label{estiI}
    t^{\beta^*}\|\Phi(u)(t)\|_{L^q}&\leq& \rho+\,Ct^{\beta^*}\int_0^t(t-\tau)^{-\frac{n}{\beta}\left(\frac{p+q-1}{q^*}-\frac{\alpha}{n}\right)}\|u(\tau)\|^{p+q}_{L^{q^*}}\,d\tau\nonumber\\
    &\leq&\rho+\,C\delta^{p+q} t^{\beta^*}\int_0^t(t-\tau)^{-\frac{n}{\beta}\left(\frac{p+q-1}{q^*}-\frac{\alpha}{n}\right)}\tau^{-\beta^*(p+q)}\,d\tau.
\end{eqnarray}
Now, using the parameter constraints in \eqref{estiA} and \eqref{estiD}, together with the condition $(p+q)\beta^*<1$, the integral becomes
\begin{equation}\label{estiJ}
\int_0^t(t-\tau)^{-\frac{n}{\beta}\left(\frac{p+q-1}{q^*}-\frac{\alpha}{n}\right)}\tau^{-\beta^*(p+q)}\,d\tau=C t^{-\beta^*},
\end{equation}
valid for all $t\geq0$. It then follows from estimates \eqref{estiI} and \eqref{estiJ} that
\begin{equation}\label{estiK}
    t^{\beta^*}\|\Phi(u)(t)\|_{L^{q^*}}\leq \rho+C\delta^{p+q}.
\end{equation}
Hence, if $\rho$ and $\delta$ are chosen sufficiently small such that $\rho+C\delta^{p+q}\leq\delta$, it follows that $\Phi(u)\in\mathbb{X}$, i.e., $\Phi: \mathbb{X}\rightarrow \mathbb{X}$. A similar argument shows that, under the same smallness assumptions on $\rho$ and $\delta$, the operator $\Phi$ is a strict contraction. Therefore, it has a unique fixed point $u\in \mathbb{X}$ which corresponds to a mild solution of equation \eqref{2}.

We now aim to prove that $u\in L^\infty((0,\infty),L^\infty(\mathbb{R}^{n}))$. We begin by showing that $u\in L^\infty((0,T),L^\infty(\mathbb{R}^{n}))$ for some sufficiently small $T>0$. Indeed, the previous argument ensures uniqueness in the space $ \mathbb{X}_T,$ where for any $T>0,$
$$
 \mathbb{X}_T:=\left\{u\in
    L^\infty((0,T),L^{q^*}(\mathbb{R}^{n}));\;\sup_{0<t<T}t^{\beta^*}\|u(t)\|_{L^{q^*}}\leq\delta\right\}.
$$
Let $\tilde{u}$ denote the local solution of \eqref{2} established in Theorem \ref{localexistenceq}. From inequality \eqref{estiB}, we know that $q_{\mathrm{sc}}<q<\infty$, which implies 
$$u_0\in L^\infty(\mathbb{R}^{n})\cap L^{q_{\mathrm{sc}}}(\mathbb{R}^{n})\subset L^\infty(\mathbb{R}^{n})\cap L^{q^*}(\mathbb{R}^{n}).$$ 
Moreover, using $$p+q>1+(\beta+\alpha)/(n-\alpha),$$ we have $$n/(n-\alpha)<q_{\mathrm{sc}}<q^*<n(p+q-1)/\alpha.$$ Thus, Theorem \ref{localexistenceq}  guarantees that $\tilde{u}\in L^\infty((0,T_{\max}),L^\infty(\mathbb{R}^{n})\cap L^{q^*}(\mathbb{R}^{n}))$. Consequently, for sufficiently small $T>0$, we have
$$\sup\limits_{t\in(0,T)}t^{\beta^*}\|\tilde{u}(t)\|_{L^{q^*}}\leq\delta.$$
Due to the uniqueness of solutions in $\mathbb{X}_T$, it follows that $u=\tilde{u}$ on
$[0,T]$, leading to the conclusion that $$u\in L^\infty((0,T),L^\infty(\mathbb{R}^{n})\cap L^{q^*}(\mathbb{R}^{n})).$$

To extend the regularity to $[T,\infty)$, we employ a bootstrap argument. For $t>T,$ we express $u(t)$ as
\begin{eqnarray*}
  u(t)-S_{\beta}(t)u_0 &=&
  \int_0^TS_{\beta}(t-\tau)I_\alpha(|u|^{p})|u|^{q-1}u(\tau)\,d\tau+\int_T^tS_{\beta}(t-\tau)I_\alpha(|u|^{p})|u|^{q-1}u(\tau)\,d\tau\\
   &\equiv& I_1(t)+I_2(t).
\end{eqnarray*}
Since $u\in L^\infty((0,T),L^\infty(\mathbb{R}^{n})\cap L^{q^*}(\mathbb{R}^{n}))$, and $$n/(n-\alpha)<q^*<n(p+q-1)/\alpha,$$ we immediately have $I_1\in
L^\infty((T,\infty),L^\infty(\mathbb{R}^{n})).$ Furthermore, by similar estimates used in the fixed-point argument and noting that $$t^{-\beta^*}\leq
T^{-\beta^*}<\infty,$$ we also get $I_1\in L^\infty((T,\infty),L^{q^*}(\mathbb{R}^{n}))$.
Next, from \eqref{estiB}, we observe that $q^*>q_{\mathrm{sc}}$, which guarantees the existence of some $r\in(q^*,\infty]$ such that
\begin{equation}\label{estiL}
\frac{n}{\beta}\left(\frac{p+q}{q^*}-\frac{\alpha}{n}-\frac{1}{r}\right)<1.
\end{equation}
For $T<t$, since $u\in
L^\infty((0,\infty),L^{q^*}(\mathbb{R}^{n})),$ we obtain
$$u^{p+q}\in L^\infty((T,t),L^{\frac{q^*}{p+q}}(\mathbb{R}^{n})).$$
 Thus, applying Lemma \ref{Lp-Lqestimate}, condition \eqref{estiL}, and Lemma \ref{Hardy} with $p/q^*=\alpha/n+1/q_1$, we deduce that $I_2\in
L^\infty((T,\infty),L^r(\mathbb{R}^{n}))$.
Since the terms $S_{\beta}(t)u_0$ and $I_1$ belong to
$$L^\infty((T,\infty),L^\infty(\mathbb{R}^{n}))\cap
L^\infty((T,\infty),L^{q^*}(\mathbb{R}^{n}))\subseteq L^\infty((T,\infty),L^r(\mathbb{R}^{n})),$$ we conclude that $u\in
L^\infty((T,\infty),L^r(\mathbb{R}^{n}))$. Iterating this procedure a finite
number of times, eventually leads to $u\in
L^\infty((T,\infty),L^\infty(\mathbb{R}^{n}))$. This completes the proof. 
\end{proof}




\section*{Acknowledgments}
Ahmad Fino is supported by the Research Group Unit, College of Engineering and Technology, American University of the Middle East. Berikbol T. Torebek is supported by the Science Committee of the Ministry of Education and Science of the Republic of Kazakhstan (Grant No. AP23483960).

\section*{Declaration of competing interest} The authors declare that there is no conflict of interest.

\section*{Data Availability Statements} The manuscript has no associated data.


\begin{thebibliography}{AKL92}

\bibitem{Baras-Pierre} P. Baras, M. Pierre, \textit{ Crit\`ere d'existence de solutions positives pour des \'equations semi-lin\'eaires non monotones}, 
Annales de l'Institut Henri Poincar\'e C, Analyse non lin\'eaire {\bf 2}, Issue 3, May-June (1985), Pages 185--212.

\bibitem{Baras} P. Baras, R. Kersner, \textit{Local and global solvability of a class of semilinear parabolic equations}, J. Diff. Equations {\bf 68} (1987), 2, 238--252.

\bibitem {CH}  T. Cazenave, A. Haraux, \textit{Introduction aux probl\`emes
d'\'evolution semi-lin\'eaires}, Ellipses, Paris, (1990).

\bibitem{DaoReissig} T.A. Dao, M. Reissig, \textit{A blow-up result for semi-linear structurally damped $\sigma$-evolution equations}, preprint on arXiv:1909.01181v1, 2019.

\bibitem{Ghergu2} R. Filippucci, M. Ghergu, {\it Singular solutions for coercive quasilinear elliptic inequalities with nonlocal terms}, Nonlinear Anal. 197 (2020) 111857.

\bibitem{Filippucci1} R. Filippucci, M. Ghergu, \textit{Fujita type results for quasilinear parabolic inequalities with nonlocal terms}, Discrete Contin. Dyn. Syst. A, {\bf 42} (2022), 1817--1833.

\bibitem{Filippucci2} R. Filippucci, M. Ghergu, \textit{Higher order evolution inequalities with nonlinear convolution terms}, Nonlinear Analysis {\bf 221} (2022), 112881.

\bibitem{Fino5} A. Z. Fino, M. Kirane, {\it  Qualitative properties of solutions to a time-space fractional
evolution equation}, Quart. Appl. Math., {\bf 70}(1) (2012), 133--157.

\bibitem{FinoViana} A. Z. Fino, A. Viana, {\it Local and global solvability of the Grushin heat equation with mixed nonlinear memory and reaction terms}, arXiv:2507.13547.

\bibitem{FinoKirane} A. Z. Fino, M. Kirane, B. Barakeh, S. Kerbal, {\it Fujita type results for a parabolic inequality with a nonlinear convolution term on the Heisenberg group}, Math. Methods Appl. Sci., {\bf 48}:13 (2025), 12863--12871.

\bibitem{Fuj} H. Fujita, \textit{On the blowing up of solutions of the problem for $u_t=\Delta u+u^{1+\alpha},$} J. Fac. Sci. Univ. Tokyo \textbf{13} (1966), 109--124.

\bibitem{galak} V. A. Galaktionov, H. A. Levine, {\it A general approach to Fujita exponents in nonlinear parabolic problems}, Nonlinear Anal. {\bf 34} (1998), 1005--1027.

\bibitem{Ghergu1} M. Ghergu, Y. Miyamoto, V. Moroz, {\it Polyharmonic inequalities with nonlocal terms}, J. Differ. Equ. 296 (2021) 799--821.

\bibitem{Kirane-Guedda} M. Guedda, M. Kirane, \textit{Criticality for Some Evolution Equations}, Differential Equations {\bf 37}:4 (2001), 540--550.

\bibitem{Hartee1} D.R. Hartree, \textit{The wave mechanics of an atom with a non-Coulomb central field. Part I. Theory and methods}, Math. Proc. Cambridge Philos. Soc. {\bf 24} (1928), 89--110.

\bibitem{Hartee2} D.R. Hartree, \textit{The wave mechanics of an atom with a non-Coulomb central field. Part II. Some results and discussion}, Math. Proc. Cambridge Philos. Soc. {\bf 24} (1928), 111--132.

\bibitem{Hartee3} D.R. Hartree, \textit{The wave mechanics of an atom with a non-Coulomb central field. Part III. Term values and intensities in series in optical spectra}, Math. Proc. Cambridge Philos. Soc. {\bf 24} (1928), 426--437.

\bibitem{Jleli1}  M. Jleli, B. Samet, {\it On a class of nonlinear evolution inequalities of convolution type on Riemannian manifolds}, Commun. Pure Appl. Anal., 22:8 (2023), 2533--2546.

\bibitem{Jleli2} M. Jleli, B. Samet, C. Vetro, {\it Nonexistence of solutions to higher order evolution inequalities with nonlocal source term on Riemannian manifolds}, Complex Var. Elliptic Equ., 68 (2023), 1521--1538.

\bibitem{11}  K. Hayakawa, \textit{ On nonexistence of global solutions of
some semilinear parabolic differential equations}, Proc. Japan Acad. {\bf 4} (1973), 503--505.

\bibitem{17}  K. Kobayashi, T. Sirao, H.  Tanaka,
{\it On the growing up problem for semilinear heat equations},  J. Math.
Soc. Japan {\bf 29} (1977),  407--424.

\bibitem{KT2025} V. Kumar and B. T. Torebek, Fujita-type results for the semilinear heat equations driven by mixed local-nonlocal operators. ArXiv, (2025). arXiv.2502.21273

\bibitem{Kwanicki} M. Kwa\'{s}nicki, \textit{Ten equivalent definitions of the fractional laplace operator}, Fract. Calc. Appl. Anal., 20 (2017), 7-51.

\bibitem{Laptev} G. I. Laptev, {\it Conditions for the nonexistence of global solutions of the Cauchy problem for a parabolic equation with integral nonlinear perturbation}, Differ. Equ. 38:4 (2002), 577--585.

\bibitem{LiebLoss} E. H. Lieb, M. Loss,  {\it Analysis}, 2nd ed., Graduate Studies in Mathematics, vol. 14, American Mathematical Society, Providence, RI, 2001

\bibitem{Liu1} B.Y. Liu, L. Ma, Invariant sets and the blow up threshold for a nonlocal equation of parabolic type, Nonlinear Analysis, 110 (2014) 141--156.

\bibitem{Liu2} X.L. Li, B.Y. Liu, Vacuum isolating, blow up threshold and asymptotic behavior of solutions for a nonlocal parabolic equation, J. Math. Phys. 58 (2017), 101503.

\bibitem{Moroz1} V. Moroz, J. Van Schaftingen, {\it Nonexistence and optimal decay of supersolutions to Choquard equations in exterior domains}, J. Differ. Equ. 254 (2013) 3089--3145.

\bibitem{Moroz2} V. Moroz, J. Van Schaftingen, {\it Groundstates of nonlinear Choquard equations: existence, qualitative properties and decay asymptotics}, J. Funct. Anal. 265 (2013) 153--184.

\bibitem{Nabti} A. Nabti, A. Alsaedi, M. Kirane, B. Ahmad, {\it Nonexistence of global solutions of fractional diffusion equation with time-space nonlocal source}, Adv. Differ. Equ. 2020 (2020), 1--10.

\bibitem{Naga} M. Nagasawa, T. Sirao, {\it Probabilistic treatment of the blowing up of solutions for a non-linear integral equation}, Trans. Am. Math. Soc., 139 (1969) 301--310.

\bibitem{Pekar} S. Pekar, \textit{Untersuchung \"{u}ber die Elektronentheorie der Kristalle}, Akademie Verlag, Berlin, 1954.
  
\bibitem{Silvestre} L. Silvestre, \textit{Regularity of the obstacle problem for a fractional power of the Laplace operator}, Comm. Pure Appl. Math., 60(1) (2007), 67-112.

\bibitem{Souplet1} P. Souplet, {\it Uniform blow-up profiles and boundary behavior for diffusion equations with nonlocal nonlinear source}, J. Differential Equations 153:2 (1999), 374--406.

\bibitem{Souplet2}  P. Souplet, {\it Uniform blow-up profile and boundary behaviour for a non-local reaction-diffusion equation with critical damping}, Math. Methods Appl. Sci., 27 (2004), 1819--1829.
 
\bibitem{22}  S. Sugitani,  {\it On nonexistence of global solutions
for some nonlinear integral equations},  Osaka J. Math.  {\bf 12}
(1975), 45--51.

\bibitem{Wang}  J. Wang, Z. B. Fang, {\it Fujita-type theorems for a nonlocal quasilinear parabolic differential inequality}, Calc. Var. Partial Differential Equations, 63 (2024), 14.

\bibitem{Zheng} Y. Zheng, Z. B. Fang, {\it New critical exponents, large time behavior, and life span for a fast diffusive p-Laplacian equation with nonlocal source}, Z. Angew. Math. Phys. 70 (2019), 144.

\bibitem{Zhou1} J. Zhou, Blow-up and lifespan of solutions to a nonlocal parabolic equation at arbitrary initial energy level, Appl. Math. Lett. 78 (2018) 118--125. MR3739764

\bibitem{Zhou2} J. Zhou, Lifespan, asymptotic behavior and ground-state solutions to a nonlocal parabolic equation, Z. Angew. Math. Phys. 71 (2020), 28.
\end{thebibliography}
\end{document}